\documentclass[10pt,article,reqno]{amsart}

\usepackage{amssymb}
\usepackage{amsthm}
\usepackage{lineno}
\usepackage{booktabs}
\usepackage{amsmath}
\allowdisplaybreaks[4]
\usepackage{subfigure}
\usepackage{epstopdf}
\usepackage[colorlinks=true,citecolor=blue]{hyperref}
\newtheorem{theorem}{Theorem}[section]
\newtheorem{lemma}{Lemma}[section]
\newtheorem{prop}{Proposition}[section]
\newtheorem{remark}{Remark}[section]
\numberwithin{equation}{section}

\begin{document}
	
	\title[]
	{A mean curvature flow approach to Hamilton's pinching theorem}

	\author{Liang Cheng, Zhenyu Lu$^{\ast}$ }
	
	\thanks{*Corresponding author} 
	
	\subjclass[2020]{Primary 53C24; Secondary 	53E20 .}

	\thanks{Liang Cheng's  Research partially supported by
		Natural Science Foundation of China 12171180
	}
	
	\address{School of Mathematics and Statistics, and Key Laboratory of Nonlinear Analysis $\&$ Applications (Ministry of Education), Central  China Normal University, Wuhan, 430079, P.R.China}

	\email{chengliang@ccnu.edu.cn }
	
	\address{School of Mathematics and Statistics, and Key Laboratory of Nonlinear Analysis $\&$ Applications (Ministry of Education), Central  China Normal University, Wuhan, 430079, P.R.China}
	
	\email{	quermassintegral@gmail.com}

	\begin{abstract}
		In this paper, we provide a proof of Hamilton's extrinsic pinching theorem using the mean curvature flow approach. 
	\end{abstract}

	\keywords{ 
		Hypersurface; Second fundamental form; Mean curvature flow; Extrinsic pinching theorem
	}
	
	\maketitle	
	\section{Introduction}\label{sec1}
	The Bonnet-Myers theorem is a classical result in Riemannian geometry. It states that a complete Riemannian manifold $(M^n, g)$ whose Ricci curvature has a positive lower bound $(n-1)k$ must be compact, and its diameter satisfies $\mathrm{diam}(M) \leq \pi / \sqrt{k}$. It is interesting to seek other curvature conditions that imply compactness for manifolds. In this direction, there exists a series of Bonnet-Myers type theorems which were obtained under pinched conditions, such as \cite{26,18,21,20,29,28,25,19}. All these pinched Bonnet-Myers type theorems were inspired by the following extrinsic pinching theorem for hypersurfaces, which was first obtained by Hamilton \cite{8}:
	\begin{theorem}\cite{8} \label{hamilton}
		Any smooth, proper, locally uniformly convex hypersurface $\operatorname{X}:M^n \to \mathbb{R}^{n+1}$, $n \geq 2$, with a pinched second fundamental form:
		$$\mathrm{II}\geq \alpha \operatorname{H}g \text{ for some } \alpha>0,$$ is necessarily compact. Here, $\mathrm{II}$ and $\operatorname{H}$ denote the second fundamental form and mean curvature for $\operatorname{X}$, ~g~is the induced Riemannian metric on~$M^n $.
	\end{theorem}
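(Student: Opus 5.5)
We argue by contradiction. Suppose $M^n$ is noncompact. A proper, locally uniformly convex hypersurface is then the boundary of a noncompact convex body and is an entire graph over a convex domain. We run the mean curvature flow $\partial_t \operatorname{X} = -\operatorname{H}\nu$ starting from $\operatorname{X}_0=\operatorname{X}$. For locally uniformly convex, properly embedded initial data, Ecker--Huisken interior estimates (or a Barles--Biton--Ley type existence result for convex entire graphs) give a smooth solution on $[0,\infty)$. The solution remains convex and is contained in the initial convex body.

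\textbf{Preservation of the pinching.} We show that $\mathrm{II}\ge \alpha \operatorname{H} g$ persists along the flow. The tensor $M_{ij}=h_{ij}-\alpha \operatorname{H} g_{ij}$ satisfies
\[
\nabla_t M_{ij}=\Delta M_{ij}+|A|^2 M_{ij},
\]
using the evolution equations of $h_{ij}$ and $\operatorname{H}$. Hamilton's maximum principle for tensors then preserves $M\ge 0$. On a noncompact manifold this requires a noncompact version of the principle. We will localize with a cutoff, or use the bounded-curvature version (Ecker--Huisken give $|A|^2\le C/t$), applied to $M_{ij}+\varepsilon\varphi\, g_{ij}$ with a suitable barrier $\varphi$, and then let $\varepsilon\to 0$.

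\textbf{Contradiction via rescaling.} This is the key step. Pinched convexity makes the principal curvatures comparable, $\kappa_{\min}\ge \alpha \operatorname{H}$. We consider the long-time behaviour after a blow-down or type III rescaling $\operatorname{X}_\lambda(t)=\lambda^{-1}\operatorname{X}(\lambda^2 t)$ as $\lambda\to\infty$. The estimate $|A|^2\le C/t$ is scale invariant, so the rescalings subconverge to a convex expanding (or eternal) solution that still satisfies $\mathrm{II}\ge\alpha \operatorname{H} g$.

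On the other hand, the blow-down of the noncompact convex body is its asymptotic cone, which is nontrivial. The limit flow therefore has a noncompact initial cone and is a convex, pinched, noncompact self-similar expander. Applying Hamilton's maximum principle to $|A|^2/\operatorname{H}^2$, together with the strong pinching estimate of Huisken,
\[
|A|^2-\tfrac1n \operatorname{H}^2\le C\operatorname{H}^{2-\sigma},
\]
adapted to the expander, forces the limit to be totally umbilic. A complete umbilic convex hypersurface is a sphere or a plane, and a sphere is compact, contradicting noncompactness.

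The main alternative to rule out is a flat limit. The blow-down cone has a vertex, and a noncompact convex cone with strictly positive pinched curvature away from the vertex cannot be a hyperplane unless the body is a half-space. That case is excluded by local uniform convexity, since uniform convexity is inherited through the pinching under scaling.

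\textbf{Main obstacle.} The hardest step is the rigidity of the limit: showing that a complete noncompact convex solution with $\mathrm{II}\ge\alpha \operatorname{H} g$ must be flat or umbilic. We would first try a direct Gauss map argument. For an expander with pinched curvature, $\operatorname{H}$ decays like $1/|x|$ along rays, while pinching forces the Gauss image to be a full hemisphere with Gauss curvature $\ge \alpha^n \operatorname{H}^n$. Integrating $K\,d\mu$ over large balls gives total curvature growing like $\int r^{-n}r^{n-1}\,dr=\infty$, whereas the Gauss map image has bounded area. This contradiction would conclude the proof.
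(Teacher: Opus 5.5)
Your proposal has two genuine gaps.

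\textbf{The curvature decay is not available from general theory.} Your argument uses the bound $|\mathrm{II}|^2\le C/t$, on a flow existing on $[0,\infty)$, in two places. You attribute it to Ecker--Huisken, but it does not come from there. The hypersurface is a graph over a convex domain $\Omega\subset\mathbb{R}^n$ that need not a priori be all of $\mathbb{R}^n$. Its curvature is not assumed bounded, so Theorem~\ref{zhu} does not apply. Ecker--Huisken's $C/t$ bound also needs a uniform gradient bound. More importantly, the estimate is false for general convex flows: the bowl soliton is a locally uniformly convex entire graph that moves by translation, so its curvature at the tip is constant in time. Any such decay must come from the pinching. Proving it, together with existence on a uniform time interval without curvature assumptions, is the main work of the paper:
\begin{itemize}
\item the local flows of Theorem~\ref{lcmcf}, built by conformal completion (Lemma~\ref{wbh}) and a Lee--Topping iteration;
\item almost-preservation of pinching under an $a/t$ bound (Lemma~\ref{weakpc});
\item the decay Lemma~\ref{dey}, which blows up a violating sequence and excludes the limit. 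The non-Type~I case is excluded by the non-existence of pinched translators, the Type~I case by Langford's umbilic estimate plus Myers.
\end{itemize}
Your preservation-of-pinching step depends on this bound, since $|\mathrm{II}|$ is unbounded as $t\to0$ and the noncompact maximum principle needs control there. Your blow-down compactness depends on it too, so both inherit the gap.

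\textbf{The concluding step fails.} A non-flat cone has a zero principal curvature in the radial direction. Pinching is scale invariant and closed under smooth convergence, so a pinched expander's asymptotic cone must be flat, and $\operatorname{H}=o(|x|^{-1})$. So the bound $K\ge\alpha^n\operatorname{H}^n\ge c|x|^{-n}$ behind your divergent $\int K\,d\mu$ is unavailable, and the ``full hemisphere'' Gauss image is unjustified. Several other steps also break:
\begin{itemize}
\item The recession cone may degenerate, for example to a ray for paraboloid-like ends.
\item Self-similarity of a flow coming out of a cone would need a uniqueness theorem.
\item Huisken's pinching estimate is not available on a noncompact expander, and at best it would give flatness.
\item Your exclusion of a flat limit (``uniform convexity is inherited under scaling'') is wrong, because blow-down sends curvature to zero.
\end{itemize}

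\textbf{The missing tool} is Hamilton's differential Harnack inequality \cite{Hamha}. For the type III flow, $\sqrt{t}\,\operatorname{H}(x,t)$ is nondecreasing and bounded by $\sqrt{nC}$. Since $\operatorname{H}>0$, it therefore has a positive limit. The blow-downs $t_j^{-1/2}X\bigl(t_j(t+1)\bigr)$ then converge to a limit for which $\sqrt{t+1}\,\operatorname{H}_\infty$ is positive and independent of $t$. By the equality case of the Harnack inequality this limit is a non-flat expander, which contradicts Proposition~\ref{no_existence_te}. That is how the paper proves non-flatness and self-similarity.
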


	Hamilton's idea of proof for Theorem \ref{hamilton} is based on the observation that the pinching condition implies that the Gauss map is a local conformal diffeomorphism, which in turn means that the conformal metric~$h$~on~$\mathcal{M}^n$~induces a complete metric of finite volume on $U\subset S^n$,~ultimately leading to a contradiction (see \cite{8} and \cite{Ni}).
	
	However, unlike Hamilton's original approach to proving the extrinsic pinching theorem, other Bonnet-Myers' type theorems under pinching conditions have been established via geometric flows. For example, studies on Bonnet-Myers type theorems under the Ricci-pinched condition $\operatorname{Ric} \ge \alpha R g>0$ on $3$-dimensional manifolds have been conducted using the Ricci flow \cite{18,20,29,19};
	other Bonnet-Myers type theorems under analogous pinching conditions for the higher-dimensional  manifolds  have also employed the Ricci flow \cite{26,LT,28,25};
	and Bonnet-Myers type theorems under the Ricci-pinched condition $\operatorname{Ric} \ge \alpha R g>0$ on $n$-dimensional locally conformally flat manifolds ($n \ge 3$) have utilized the Yamabe flow \cite{21,28}.

	
	It is natural to ask whether Hamilton's extrinsic pinching theorem  could be proved using methods from geometric flows, such as those used in the aforementioned pinched Bonnet-Myers type theorems. This question was addressed by Bourni, Langford, and Lynch \cite{14}, who provided such a proof under the additional assumption that the second fundamental form is bounded. In this paper, we give a proof of Hamilton’s extrinsic pinching theorem via mean curvature flow, without imposing any additional assumptions. Our approach is based on the following main theorem:
	\begin{theorem}\label{czx}
		For any given~$\alpha>0$ and $n>2$,~there exists a constant~$C\doteq C(\alpha,n)>0$~such that the following holds:~Let~$X_0:M^n\to \mathbb{R}^{n+1}$~be a complete non-compact hypersurface. If $M^n$ satisfies the pinching condition~$\mathrm{II} \geq \alpha  \operatorname{H}g \geq 0$,~then there exists a complete solution~$X(t)$ to the mean curvature flow for $t\in [0,+\infty)$~starting from $X_0$ such that
		
		(1)~$| \mathrm{II}(x, t)|^2 \leq \frac{C}{t},$
		
		(2)~${\mathrm{II}}(x, t) \geq \alpha  \operatorname{H}(x, t)g(x,t)\geq0,$\\
		for all~$(x, t) \in M^n \times(0, +\infty)$.   Here $C(\alpha,n)$~denotes a constant $C$ depending on $\alpha$ and $n$.
		
	\end{theorem}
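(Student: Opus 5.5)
We prove Theorem \ref{czx} by an exhaustion and compactness argument. The heart of it is a uniform curvature estimate of the form $|\mathrm{II}|^2\le C/t$ for local flows, and that estimate depends only on $\alpha$ and $n$.

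\textbf{Step 1: local flows and preservation of pinching.} Since $\mathrm{II}\ge \alpha \operatorname{H} g\ge 0$, the hypersurface is weakly convex, so $X_0$ bounds a convex region. We exhaust it by bounded convex domains, obtained for instance by cutting $X_0(M)$ with large balls and smoothing, or by taking the convex hulls of $X_0(M)\cap B_{R_k}$. Each approximating hypersurface $M_k$ is compact and convex, and its pinching is $\alpha_k$ with $\alpha_k\to\alpha$ on compact sets. This approximation needs care, because cutting destroys the pinching near the cut. An alternative is to solve the MCF for the graph-like or proper convex hypersurface directly, using Ecker--Huisken type local existence for locally Lipschitz entire graphs. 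We first show, via Hamilton's tensor maximum principle on the evolving hypersurfaces, that $\mathrm{II}-\alpha\operatorname{H}g\ge0$ is preserved. The relevant evolution is
\[
(\partial_t-\Delta)h_{ij}=-2\operatorname{H}h_{ik}h^k_j+|\mathrm{II}|^2h_{ij},
\]
and the null eigenvector condition holds because the reaction term at a null vector of $h-\alpha\operatorname{H}g$ equals $\alpha\operatorname{H}(|\mathrm{II}|^2-\operatorname{H}\kappa_1)\cdot$ (a nonnegative quantity) $\ge0$. This is the same computation as in Huisken's convex case.

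\textbf{Step 2: the key estimate $|\mathrm{II}|^2\le C/t$.} Pinching gives $|\mathrm{II}|^2\le \operatorname{H}^2$ and $\kappa_{\min}\ge \alpha \operatorname{H}$. It therefore suffices to prove $\operatorname{H}^2\le C(\alpha,n)/t$ uniformly for the approximating flows. We argue by contradiction with a point-picking blow-up. Suppose $\sup t\operatorname{H}^2\to\infty$ along a sequence. We choose points (Perelman/Hamilton point-selection) where $t\operatorname{H}^2$ is almost maximal in a backward parabolic neighbourhood, and rescale so that $\operatorname{H}=1$ there. The rescaled flows have bounded curvature on larger and larger backward regions, and they remain $\alpha$-pinched and convex. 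By the curvature bounds and Shi-type interior estimates for MCF (or Ecker--Huisken interior estimates), they converge to an ancient or eternal, complete, noncompact, $\alpha$-pinched convex flow with bounded curvature and $\operatorname{H}=1$ at the origin. Non-compactness of the limit follows because $t\operatorname{H}^2\to\infty$ forces the rescaled existence time to infinity, while a compact pinched limit would have to come from a compact piece, which contradicts the point selection. Now the Bourni--Langford--Lynch result applies: the bounded-curvature case of Hamilton's theorem, or equivalently their classification of pinched ancient solutions, forces a complete noncompact pinched solution with bounded curvature to be flat. This contradicts $\operatorname{H}=1$. We expect this step to be the main obstacle. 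It requires ensuring that the limit is complete and noncompact, and controlling the point selection near the cut boundaries of the approximations. A localized (pseudolocality-type) estimate for MCF may be needed so that the bound does not depend on the approximation.

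\textbf{Step 3: passing to the limit.} With $|\mathrm{II}_k|^2\le C/t$ uniform in $k$, we need control near $t=0$. Interior estimates of Ecker--Huisken type give curvature bounds on compact sets for short time, depending only on the local geometry of $X_0$. Combined with the $C/t$ bound and higher derivative estimates, Arzel\`a--Ascoli yields a subsequence converging smoothly on compact subsets of $M\times[0,\infty)$ to a complete MCF $X(t)$ starting from $X_0$. Properties (1) and (2) pass to the limit, and completeness follows from the displacement bound $|X(t)-X_0|\le C\sqrt{t}$, which in turn follows from (1).
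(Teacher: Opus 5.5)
There is a genuine gap, and it is the one you flag yourself as ``the main obstacle'': your argument has no localization, and without it Steps 1 and 2 break down.

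Capping off $X_0(M)\cap B_{R_k}$, whether by smoothing or via convex hulls, creates a transition region where one principal curvature is very large and the others are small. There $\kappa_{\min}/\operatorname{H}$ degenerates with no uniform lower bound. Hamilton's tensor maximum principle is a global argument, so it cannot preserve pinching ``on compact sets'' once pinching fails near the cap. The Ecker--Huisken alternative does not rescue this: it needs a uniform local Lipschitz condition that you do not establish, and a maximum principle on a noncompact flow with possibly unbounded initial curvature needs growth control you do not have.

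More seriously, the bound $\operatorname{H}^2\le C/t$ ``uniformly for the approximating flows'' is false for compact approximators. Each one becomes extinct in finite time with $\operatorname{H}\to\infty$, so $\sup t\operatorname{H}^2=\infty$ for every $k$. Point-picking near the extinction time then yields a compact shrinking sphere, so your claim that the blow-up limit is noncompact has nothing behind it. The blow-up contradiction only works for a \emph{local} statement: on a ball inside a connected noncompact hypersurface, for times short relative to the radius. In that setting a compact limit would force a whole time slice to be compact.

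Supplying that local statement is what the paper's proof does.
\begin{enumerate}
\item Local flows come from conformally completing a ball of the initial hypersurface while keeping its second fundamental form bounded (Lemma \ref{wbh}), and then applying Theorem \ref{zhu}.
\item Lemma \ref{dey} gives $|\mathrm{II}|^2\le a_1/t$ at the center of a unit ball for $t\le S_1$. It assumes only the approximate pinching $\mathrm{II}\ge(\alpha\operatorname{H}-1)g$ on that ball. This is your point-picking blow-up. The paper closes it via the Type I versus translator dichotomy and Lemma \ref{umblic}. Once noncompactness of the limit is secured, the bounded-curvature theorem of Bourni--Langford--Lynch, as you propose, would also close it.
\item Lemma \ref{weakpc} gives almost-preservation of pinching at the center by a cutoff maximum principle, provided $|\mathrm{II}|^2\le a/t$ holds on the ball.
\end{enumerate}
Each of these two estimates needs the other, so the pinching cannot be settled first and separately as in your Step 1.

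The paper alternates the two along $t_{k+1}=(1+\gamma a_1^{-1})t_k$, $r_{k+1}=r_k-\lambda_0\sqrt{a_0t_k}$. This produces a flow on $B_R\times[0,T(\alpha,n)]$ from pinching on $B_{R+4}$ (Theorem \ref{lcmcf}). Parabolic rescaling $X^{(i)}(t)=i\tilde X^{(i)}(i^{-2}t)$ then gives flows on $B_i\times[0,Ti^2]$ with the scale-invariant bound $C/t$ and a pinching error tending to $0$. Only at that stage does your Step 3 (interior estimates followed by a diagonal limit) apply.
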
  
	
	\begin{remark}
		Theorem~\ref{czx}, in combination with the result of Bourni--Langford--Lynch (Theorem~7 in~\cite{14}), yields a proof of Hamilton's extrinsic pinching theorem via the mean curvature flow. Indeed, since we have already shown that any mean curvature flow satisfying the hypotheses of Theorem~\ref{czx} must be of Type~III, we may restrict the argument of Bourni--Langford--Lynch to the Type~III case, thereby simplifying their proof (see the proof of Theorem \ref{hamilton1}).
	\end{remark}

	Note that Ecker--Huisken proved the existence of mean curvature flow for complete noncompact hypersurfaces in n Euclidean space under a uniform local Lipschitz condition (see Theorem~4.2 in \cite{10}). However, in the absence of a uniform local Lipschitz condition, the general existence theory for mean curvature flow on noncompact hypersurfaces remains less developed.
	In order to prove Theorem~\ref{czx}, we need to construct a local mean curvature flow for a ball whose existence time depends solely on the pinching constant and the dimension $n$.
	Specifically, we need to establish the following result, from which Theorem~\ref{czx} follows by taking the limit $R \to \infty$: 
	\begin{theorem}\label{lcmcf}
		For any given $\alpha > 0$ and $n>2$, there exist constants $T(\alpha, n)$, $C(\alpha, n) > 0$ such that the following hold:
		Let $X_0: M^n \to \mathbb{R}^{n+1}$ be an embedded (but not necessarily complete) hypersurface. If the hypersurface $\mathcal{M}^n=X(M^n)$ satisfies the pinching condition
		$$ \mathrm{II}\left(x\right) \geq \alpha  \operatorname{H}\left(x\right)g\ge 0
		~~~in ~~~B_{R+4},$$
		then there exists a smooth mean curvature flow $X(t)$ defined on $B_{R} \times [0, T]$ with initial data $X(0) = X_0$, and satisfying:
		
		(i)~$| \mathrm{II}(x, t)|^2 \leq \frac{C}{t} ~~\text{on} ~~ B_{R} \times(0, T],$
		
		(ii)~$
		\mathrm{II}(x, t) \geq \left(\alpha  \operatorname{H}(x, t)-1\right)g(x,t)
		~~\text{on} ~~ B_{R} \times(0, T].$ \\
		Here, we have used the notation $  B_r\doteq \{x\in \mathbb{R}^{n+1} \big|\left|  x  \right|<r\}$, $  B(x_0,r)\doteq \{x\in \mathbb{R}^{n+1} \big|\left|  x  -x_0\right|<r\}$ and $  Q^{r}(p)\doteq B(X(p),r)\cap \mathcal{M}^n $ for $p\in M^n$ and $\mathcal{M}^n=X(M^n)$. 
	\end{theorem}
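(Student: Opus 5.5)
Our plan is to construct the local flow by localizing, obtaining curvature estimates via a continuity/contradiction argument, and propagating the pinching with a cutoff maximum principle. First, modify the initial hypersurface outside $B_{R+3}$ so that we have a hypersurface with bounded curvature on which a short-time smooth flow exists. For instance, flow a truncated piece with a boundary condition, or multiply the velocity by a cutoff that vanishes near $\partial B_{R+3}$ (the standard "local Ricci flow" trick adapted to MCF). Initially the existence time of this approximate flow depends on the unbounded curvature of $X_0$ in $B_{R+4}$. The aim is to show that inside $B_R$ the flow actually exists on a uniform interval $[0,T(\alpha,n)]$ with the bounds (i) and (ii). Let $T_{\max}$ be the maximal time on which (i) holds with constant $C$ on $B_{R+2}$, with the pinching (ii) on $B_{R+2}$. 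We will show that $T_{\max}\ge T(\alpha,n)$ and that the bounds are strict, so the flow can be extended.

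The core is the estimate $|\mathrm{II}|^2\le C/t$. We argue by contradiction with a point-picking and blow-up argument, in the spirit of Simon/Topping local Ricci flow and Bourni--Langford--Lynch. Suppose there are sequences of flows, points and times with $t_k|\mathrm{II}|^2(x_k,t_k)\to\infty$ while $t_k\to 0$. Perelman-type point selection then gives points where the curvature is almost maximal in a parabolic neighborhood of size comparable to $|\mathrm{II}|^{-1}$. Rescaling yields a limit flow with bounded curvature, $|\mathrm{II}|(0,0)=1$, and $t|\mathrm{II}|^2$ bounded, defined on $(-\infty,0]$-type domains or at least on a long backward interval. Because $t_k\to0$ and the rescaled time interval becomes infinitely long, the limit is an ancient (or eternal) complete flow. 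It is also pinched, $\mathrm{II}\ge\alpha Hg$, since the error term $-1$ in (ii) scales away. For the pinched ancient flow we then need rigidity. Hamilton's pinching estimate for MCF (via the tensor maximum principle, since $\mathrm{II}-\alpha Hg\ge0$ is preserved) combined with the Type~III bound forces a contradiction. One route is to use Bourni--Langford--Lynch's result that a complete, bounded-curvature, pinched ancient solution must be a shrinking sphere, hence compact. A compact solution cannot arise as a blow-up limit of a noncompact local piece with the assumed scale. Alternatively, the argument can be run so that the limit is entire with $t|\mathrm{II}|^2$ bounded for all $t>0$, contradicting the convexity-driven finite-time extinction of pinched solutions. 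Distance control, ensuring $B_R$ stays inside the region where estimates hold, comes from $|\partial_t X|=H\le\sqrt{n}|\mathrm{II}|\le \sqrt{nC/t}$. Integrating gives displacement at most $2\sqrt{nCt}<1$ for $t\le T$ small.

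The pinching (ii) is propagated by applying the maximum principle to the tensor $\mathrm{II}-(\alpha H-1)g$ multiplied by a spatial cutoff $\varphi(|X|^2+2nt)$ supported in $B_{R+3}$. The evolution $\partial_t h_{ij}=\Delta h_{ij}-2Hh_{ip}h^p_j+|\mathrm{II}|^2h_{ij}$ gives a reaction term that preserves $\mathrm{II}\ge\alpha Hg$ (Hamilton/Huisken). The cutoff error terms are bounded by $|\nabla\varphi|^2/\varphi$ and $|\mathrm{II}|\lesssim t^{-1/2}$. Their contribution integrates over $[0,T]$ to at most $1$, which is where the slack "$-1$" comes from. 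We expect the main obstacle to be the blow-up step: making the point-picking work with a pinching that only holds up to the error $-1$, and establishing the needed rigidity/compactness of the limit without a global curvature bound. We would first try to invoke Theorem~7 of \cite{14} on the bounded-curvature limit.
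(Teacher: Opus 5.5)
Your two a priori ingredients are essentially the paper's: the blow-up argument giving $|\mathrm{II}|^2\le a_1/t$ from almost-pinching is Lemma~\ref{dey}, and the cutoff maximum principle giving almost-pinching from a $C/t$ bound is Lemma~\ref{weakpc}. The blow-up step is not where the difficulty lies.

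\textbf{The main gap is existence.} Nothing in your continuity argument on the fixed region $B_{R+2}$ continues the flow past the lifespan of the initial localized flow. That lifespan is governed by $\sup|\mathrm{II}_0|$ and by the modified region outside $B_{R+2}$. There neither (i) nor (ii) is available, and the flow may become singular. So ``the bounds are strict, so the flow can be extended'' does not follow.

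There is a second obstruction. Each estimate yields its conclusion at a point only from hypotheses on a ball of radius comparable to $\sqrt{t}$ around that point. Lemma~\ref{dey} needs pinching on $Q^{1,t}(x_0)$ after rescaling, and Lemma~\ref{weakpc} needs the $a/t$ bound on $Q^{1,0}(x_0)$. Hence a bootstrap with (i) and (ii) assumed on one fixed region cannot close near $\partial B_{R+2}$.

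The missing idea is the paper's restart iteration, after Hochard and Lee--Topping:
\begin{itemize}
\item At time $t_k$, Lemma~\ref{dey} gives $|\mathrm{II}|^2\le a_1/t_k$ on a slightly smaller region.
\item Re-complete that region by the conformal change of Lemma~\ref{wbh} and apply Theorem~\ref{zhu} with $\rho^2=t_k/a_1$. This extends the flow to $t_{k+1}=(1+\gamma a_1^{-1})t_k$ with $|\mathrm{II}|^2\le\lambda(n)(a_1+\gamma)/t\le a_0/t$.
\item Each step costs radius $\lambda_0\sqrt{a_0t_k}$. Since the $t_k$ grow geometrically, the total loss is a fixed multiple of $\sqrt{t_i}$ at the last step, independent of the initial curvature. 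This is what produces the uniform time $T(\alpha,n)$.
\item Closing the induction uses that $a_1$ depends only on $(\alpha,n)$ and not on the bootstrap constant $a_0$, so one may take $a_0\ge\lambda(n)(a_1+\gamma)$.
\end{itemize}

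\textbf{A second problem is in your pinching step.} You claim the cutoff errors ``integrate over $[0,T]$ to at most $1$''. This ignores that the reaction term $|\mathrm{II}|^2(\mathrm{II}-\alpha\operatorname{H}g)$ has coefficient of size $C/t$, which is not integrable at $t=0$. The comparison ODE $\lambda'=C\lambda/t$ with $\lambda(0)=0$ has the solutions $c\,t^{C}$ for every $c\ge 0$. So Gronwall gives nothing, and any error introduced at time $s$ is amplified by $(t/s)^{C}$.

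The paper handles this in Lemma~\ref{weakpc} with three devices:
\begin{itemize}
\item a barrier $\eta=\tfrac12 t^l$ with $l$ large;
\item a high power $\Phi^m$ of an exponentially decaying cutoff;
\item the a priori bound on the pinching defect, $\lambda\le C\sqrt{a/t}$, used to control $|\nabla\Phi|^2/\Phi^2$ at the first touching point.
\end{itemize}
This yields $\lambda\le t^l$ for $t\le S(a,\alpha)$. The slack $-1$ in (ii) comes from $t^l\le 1$, not from integrating error terms.
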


	Next, we sketch the strategy for the proof of Theorem~\ref{lcmcf}. Our argument is an iterative induction procedure inspired by the ideas of Lee--Topping in~\cite{29} and \cite{LT}.
	We begin by performing a conformal transformation on the initial metric restricted to a local ball. This transformation renders the metric complete and ensures that the second fundamental form is bounded, while leaving the metric unchanged on a smaller interior region. This construction is then applied inductively at each step of the iteration.
	In particular, we first choose an initial time $t_1 > 0$ and an initial radius $r_1 = R + 3$ such that the estimate
	\begin{equation}
		|\mathrm{II}(x, t)|^2 \leq \frac{C}{t}
		\label{eqQ}
	\end{equation}
	holds, and the pinching condition is almost preserved, on $Q^{r_1}(p) \times [0, t_1]$. 
	We then inductively define sequences of times $\{t_k\}$ and radii $\{r_k\}$ by
	\[
	t_{k+1} = (1 + C_1) t_k, \qquad r_{k+1} = r_k - C_2 t_k^{1/2},
	\]
	where $C_1, C_2 > 0$ are uniform constants independent of $k$.
	Assuming inductively that inequality~\eqref{eqQ} holds on $Q^{r_k}(p) \times [t_{k-1}, t_k]$, a crucial step in our proof is to use this hypothesis to show that the pinching condition remains almost preserved under the local mean curvature flow constructed above (see Lemma~\ref{weakpc}).  Based on this, we establish that the estimate~\eqref{eqQ} continues to hold on the next cylinder $Q^{r_{k+1}}(p) \times [t_k, t_{k+1}]$.
	By carrying out this inductive process, we ultimately obtain a local solution $X(t)$ to the mean curvature flow with the desired properties.

	The present paper is organized as follows. In Section \ref{section2} we review fundamental aspects of mean curvature flow. Furthermore, we will prove a number of auxiliary lemmas that will be essential for subsequent analysis.  Section \ref{section3} is devoted to priori estimates. Under certain prescribed conditions, we derive decay estimates for the second fundamental form and establish estimates confirming the preservation of the pinching condition over the entire mean curvature flow. Section~\ref{section4} is dedicated to the proof of the main Theorems~\ref{lcmcf} and~\ref{czx}. 
	Using the results and estimates established in Sections~\ref{section2} and~\ref{section3}, 
	we first complete the proof of Theorem~\ref{czx}, and then provide a proof of Hamilton's extrinsic pinching theorem via the mean curvature flow approach.   
	
	\section{Preliminaries}\label{section2}
	A family of smooth embeddings ~$X(t):M^n\times [0,T)\to \mathbb{R}^{n+1}$ is said to evolve by mean curvature flow if
	\begin{equation}
		\begin{split}
			\left\{ \begin{array}{l}
				\partial _tX\left( x,t \right) =-\operatorname{H}\left( x,t \right) N\left( x,t \right) ,~~~x\in M^n,t>0,\\
				X\left( \cdot ,0 \right) =X_0( \cdot ),~~~~~~~~~~~~~~~~~~~~~~~~~x\in M^n,
			\end{array} \right.
		\end{split}
		\label{eqmcf}
	\end{equation}
	where ~$N\left( x,t \right)$~is the unit outward normal vector of the hypersurface~$\mathcal{M}^n_t=X(M^n,t)$ and~$\operatorname{H}\left( x,t \right)$~is the mean curvature.
	
	Along the flow~\eqref{eqmcf}, we have the following evolution equations on the induced metric $g_{ij}$, unit outward normal $N$, the second fundamental form $\mathrm{II}$ and mean curvature $H$ of $\mathcal{M}_t$:  
	\begin{equation}
		\partial _tg_{ij}=-2\operatorname{H}\mathrm{II}_{ij},
		\label{g}
	\end{equation}
	\begin{equation}
		\partial _tN=\nabla \operatorname{H},
		\label{N}
	\end{equation}
	\begin{equation}
		\partial _t\mathrm{II}_{ij}=\varDelta \mathrm{II}_{ij}+\left| \mathrm{II} \right|^2\mathrm{II}_{ij}-2H\mathrm{II}^2_{ij},
		\label{2}
	\end{equation}
	\begin{equation}
		\partial _t\operatorname{H}=\varDelta \operatorname{H}+\left| \mathrm{II} \right|^2\operatorname{H},
		\label{h}
	\end{equation}
	\begin{equation}\label{sec1:evolution|x|}
		\left( \partial _t-\varDelta \right) \left| X \right|^2=-2n.
	\end{equation}
	Here, $\nabla $~denotes the  Levi-Civita connection with respect to the induced metric $g_{ij}$ on $X_t$.

	\subsection{Formulas for Conformal Changes}
	
	We first study how the second fundamental form changes under a conformal transformation.

	\begin{prop}\label{gx}
		Let $X \colon M^n \to (\mathbb{R}^{n+1}, g)$ be an immersion with second fundamental form $\mathrm{II}$. For any smooth function $f \in C^\infty(\mathbb{R}^{n+1})$, define the conformally related metric $\tilde{g} = e^{2f}g$. Then the norm of the second fundamental form $\widetilde{\mathrm{II}}$ of the immersion $X \colon M^n \to (\mathbb{R}^{n+1}, \tilde{g})$ are given by:
		$$
		\left|\widetilde{\mathrm{II}}\right|_{\tilde{g}} = e^{-f}\left|(\mathrm{II} - g\nabla_N f)\right|_g,
		$$
		where $\nabla_N$ denotes the covariant derivative with respect to the Levi-Civita connection of $g$ in the normal direction.
	\end{prop}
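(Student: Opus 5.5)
We compute the second fundamental form directly from the transformation law of the Levi-Civita connection under a conformal change. Let $D$ and $\tilde D$ denote the Levi-Civita connections of $g$ and $\tilde g=e^{2f}g$ on $\mathbb{R}^{n+1}$. The standard formula is
\[
\tilde D_U V = D_U V + (Uf)V + (Vf)U - g(U,V)\,Df ,
\]
where $Df$ is the $g$-gradient of $f$. We also need the normals. If $N$ is a $g$-unit normal to $X$, then $\tilde N=e^{-f}N$ is a $\tilde g$-unit normal: it is still $\tilde g$-orthogonal to the tangent space, and $\tilde g(\tilde N,\tilde N)=e^{2f}e^{-2f}=1$. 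Throughout we use the sign convention $\mathrm{II}(U,V)=g(D_UV,N)$, adjusting signs at the end if the paper's convention (outward normal) differs.

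The first step evaluates $\widetilde{\mathrm{II}}(U,V)=\tilde g(\tilde D_UV,\tilde N)$ for tangent fields $U,V$. In the formula for $\tilde D_UV$, the two middle terms $(Uf)V+(Vf)U$ are tangent, so they are orthogonal to $N$ and drop out. This gives
\[
\widetilde{\mathrm{II}}(U,V)=e^{2f}e^{-f}\,g\bigl(D_UV-g(U,V)Df,\;N\bigr)=e^{f}\bigl(\mathrm{II}(U,V)-g(U,V)\,\nabla_N f\bigr),
\]
with $\nabla_Nf=g(Df,N)=Nf$. Hence $\widetilde{\mathrm{II}}=e^{f}(\mathrm{II}-g\nabla_Nf)$ as $(0,2)$-tensors on $M$.

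The second step computes the norm. The induced metric transforms as $\tilde g|_M=e^{2f}g|_M$, so the inverse metric gains a factor $e^{-2f}$. The squared norm of a $(0,2)$-tensor uses two inverse metrics, so
\[
|\widetilde{\mathrm{II}}|_{\tilde g}^2=e^{-4f}\,e^{2f}\,|\mathrm{II}-g\nabla_Nf|_g^2 .
\]
Taking square roots gives the claimed identity $|\widetilde{\mathrm{II}}|_{\tilde g}=e^{-f}|\mathrm{II}-g\nabla_Nf|_g$.

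The main point needing care is sign and normalization. Changing the orientation of $N$ flips the sign of $\mathrm{II}$ and of $\nabla_Nf$ together, and does not affect the norm. So the formula holds regardless of the outward convention, and nothing else in the argument is delicate.
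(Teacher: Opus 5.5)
Your proof is correct and follows essentially the same route as the paper. The paper writes the conformal change of the Levi-Civita connection in adapted coordinates (Christoffel symbols with $\partial_{n+1}=N$), lets the tangential correction terms drop out against $\tilde N=e^{-f}N$, and obtains $\widetilde{\mathrm{II}}_{ij}=e^{f}(\mathrm{II}_{ij}-g_{ij}\nabla_{N}f)$; your coordinate-free version of this, together with your explicit count of the conformal factors in the norm, fills in the paper's closing ``this implies'' step.
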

	\begin{proof}
		Let $\{x_1,\cdots,x_{n+1}\}$ be local coordinates such that $\{x_1,\cdots,x_{n}\}$ air  local coordinates for $M^n$ and $\frac{\partial }{\partial x_{n+1}} =N$ .
		Since the ~Christoffel~ symbol satisfies
		\begin{equation*}
			\begin{split}
				\widetilde{\varGamma } _{ij}^{k}&=\frac{1}{2}\tilde{g}^{kl}\left( \partial _j\tilde{g}_{li}+\partial _i\tilde{g}_{lj}-\partial _l\tilde{g}_{ij} \right) \\
				&=\frac{1}{2}e^{-2f}g^{kl}\left( \partial _j\left( e^{2f}g_{li} \right) +\partial _i\left( e^{2f}g_{lj} \right) -\partial _l\left( e^{2f}g_{ij} \right) \right) \\
				&=\frac{1}{2}e^{-2f}g^{kl}\left( 2e^{2f}\nabla _jfg_{li}+e^{2f}\partial _jg_{li}+2e^{2f}\nabla _ifg_{lj}+e^{2f}\partial _ig_{lj}-2e^{2f}\nabla _lfg_{ji}-e^{2f}\partial _lg_{ji} \right) \\
				&=\varGamma _{ij}^{k}+\delta_{ki}\nabla _jf+\delta_{kj}\nabla _if-g_{ij}\nabla _kf,\\
			\end{split}
			\label{eq:label}
		\end{equation*}
		we have
		\begin{equation*}
			\begin{split}
				\widetilde{\mathrm{II}}_{ij}&=\tilde{g}\left< \tilde{\nabla}_{\partial _i}\partial _j,\widetilde{N} \right> \\
				&=\tilde{g}\left< \widetilde{\varGamma }_{ij}^{k}\partial _k,e^{-f}\partial _{n+1} \right>\\ 
				&=e^{f}\widetilde{\varGamma }_{ij}^{n+1}\\
				&=e^{f}(\varGamma _{ij}^{n+1}+\delta_{n+1,i}\nabla _jf+\delta_{n+1,j}\nabla _if-g_{ij}\nabla _{n+1}f)\\
				&=e^{f}(\mathrm{II}_{ij}-g_{ij}\nabla _{n+1}f).\\
			\end{split}
			\label{eq:label}
		\end{equation*}
		This implies $$
		\left|\widetilde{\mathrm{II}}\right|_{\tilde{g}} = e^{-f}\left|(\mathrm{II} - g\nabla_N f)\right|_g.
		$$
	\end{proof}

	To construct a local mean curvature flow, we need to modify a small incomplete region on the hypersurface. The goal is to make the metric on this local patch complete, while leaving the interior unchanged and without altering its second fundamental form too much. This necessitates the construction of a cut-off function using the distance function.
	Given an open set $U$ on the hypersurface, performing a conformal transformation on the metric over $U$ to "push the boundary to infinity" constitutes a crucial step in the completion of this local region.
	\begin{lemma}\label{wbh}
		Let~$X:M^n\to (\mathbb{R}^{n+1},g_{edu})$~be an embedding with the second fundamental form $\mathrm{II}$ and induced metric $g$. Let $U \subset \mathcal{M}^n $ be an open domain satisfying $|\mathrm{II}(x)|\leq 1$~on $ U$ and $Q^{1}(p)\subset \subset U$ for a point $X(p) \in U$. Then there exist an open domain ~$\tilde{U}$~ such that ~$(U)_1\subset \tilde{U} \subset U$~ and for any $l> C(n)$ one can produce an induced metric ~$\tilde {h}_l$~ on ~$\tilde{U}$ with
		
		(1)~$(\tilde{U},\tilde {h}_l)$~is complete,
		
		(2)~$\tilde {h}_l \equiv g $ on $ (\tilde{U})_{\sqrt{\frac{C\left( n \right)}{l}}},$
		
		(3)~$|\widetilde{\mathrm{II}}(x)|_{\tilde {h}_l}\leq l ~~\text{on}~~~ \tilde{U}.$\\ 
		Here,
		we denote $\left( U \right) _r\doteq\left\{ x\in U\ |\ B(x,r)\cap \mathcal{M}^n  \subset \subset U \right\} $. 
	\end{lemma}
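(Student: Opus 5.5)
We will obtain $\tilde h_l$ as the metric induced on $\tilde U$ by a conformally changed ambient metric $e^{2f}g_{edu}$, with $f$ blowing up at $\partial\tilde U$. Proposition~\ref{gx} then controls the second fundamental form: $|\widetilde{\mathrm{II}}|_{\tilde h}=e^{-f}|\mathrm{II}-g\nabla_N f|$.

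\emph{Choice of $\tilde U$ and distance function.} Work with the ambient distance to the complement. Set $d(x)=\mathrm{dist}_{\mathbb{R}^{n+1}}(x,\mathcal{M}^n\setminus U)$ and $\tilde U=\{d>1/2\}$, so that $(U)_1\subset\tilde U\subset U$ (up to adjusting constants). The function $d$ is only Lipschitz, so first mollify it in $\mathbb{R}^{n+1}$ to a smooth $\rho$ with $|\rho-d|\le 1/10$, $|D\rho|\le 2$ and $|D^2\rho|\le C(n)$ near $\tilde U$. Since $Q^1(p)\subset\subset U$, the set $\tilde U$ is nonempty and contains a neighbourhood of $X(p)$. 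Define $s=\rho-\rho_0$, with $\rho_0$ chosen so that $\tilde U=\{s>0\}$ after a harmless redefinition of $\tilde U$.

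\emph{The conformal factor.} Take $f=\phi(s)$, where $\phi\equiv0$ for $s\ge \sqrt{C(n)/l}$ and $\phi(s)=-\log(s\sqrt{l})+O(1)$ near $s=0$. Concretely, $e^{-\phi(s)}=\psi(s)$ with $\psi$ smooth and increasing, $\psi(s)\approx \sqrt l\, s$ for small $s$, and $\psi\equiv1$ for $s\ge \sqrt{C/l}$. This gives the three conclusions as follows.
\begin{itemize}
\item[(2)] On $(\tilde U)_{\sqrt{C/l}}$ we have $s\ge\sqrt{C/l}$, so $f=0$ and $\tilde h_l=g$.
\item[(1)] For completeness, a curve exiting $\tilde U$ has $s\to0$. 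Its $\tilde h$-length is at least $\int e^{\phi}|ds|/|Ds|\ge \tfrac12\int ds/(\sqrt l s)=\infty$, using $|Ds|\le2$. Since $\bar U$ is closed in the ambient space, distance-bounded sets are relatively compact.
\item[(3)] By Proposition~\ref{gx},
\[
|\widetilde{\mathrm{II}}|\le \psi\,(|\mathrm{II}|+\sqrt n|\phi'||D s|)\le \psi+2\sqrt n\,\psi|\phi'|=\psi+2\sqrt n|\psi'| .
\]
We then need $\psi'\lesssim\sqrt l$, which is not the same as $\le l$. We can therefore take $\psi'\le \sqrt l\le l$ wherever $\psi\le 1$, provided the transition from $\sqrt l s$ to $1$ happens over an interval of length about $1/\sqrt l$. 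That fixes the scale $\sqrt{C(n)/l}$ and requires $l>C(n)$ so that $\sqrt{C/l}<1/2$.
\end{itemize}

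\emph{Main obstacle.} The delicate step is ensuring that $\tilde U$ is an open domain of $\mathcal{M}^n$ on which the ambient function behaves well. Three things need care:
\begin{itemize}
\item the smoothing of $d$ must keep $|Ds|$ bounded;
\item the level set $\{s=0\}$ need not be smooth on $\mathcal M$, but this is irrelevant because $f$ blows up there and we never need regularity of the boundary;
\item $|\mathrm{II}|\le1$ is only used to bound the first term.
\end{itemize}
One must also check that $(U)_1\subset\tilde U$ under the normalization chosen for $\rho_0$; with $|\rho-d|\le1/10$ this follows since $d\ge1$ on $(U)_1$. The ambient gradient bound, rather than an intrinsic one, is what makes the completeness estimate work.
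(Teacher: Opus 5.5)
Your strategy is the paper's: restrict an ambient conformal metric $e^{2f}g_{edu}$ with $f$ blowing up at $\partial\tilde U$, and use Proposition~\ref{gx} to reduce (3) to a bound on $e^{-f}|f'|=|\psi'|$. Your estimate for (3) and the length bound $\tfrac12\int e^{\phi}\,ds$ for (1) are fine. The structural difference is the cutoff. You use $d=\mathrm{dist}(\cdot,\mathcal M^n\setminus U)$ mollified at the fixed scale $1/10$, whereas the paper reduces to $V=B_2$ with an explicit radial cutoff, and \emph{this is where (2) breaks}.

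The claim ``$s\ge\sqrt{C/l}$ on $(\tilde U)_{\sqrt{C/l}}$'' needs a \emph{lower} bound for $s(x)$ in terms of $\mathrm{dist}(x,\{s\le0\})$. The bound $|Ds|\le 2$ only gives the reverse inequality $s(x)\le 2\,\mathrm{dist}(x,\{s\le 0\})$. A fixed-scale mollification destroys any comparability at the scales $\sqrt{C/l}\ll 1/10$ that matter. For example, suppose $\mathcal M^n\setminus U$ contains two parallel flat sheets and $U$ contains a piece of their mid-plane. Then $d$ has a ridge along the mid-plane, and there $\rho\approx\rho_0+\sigma-c\,t^2$, with $t$ the normal coordinate from the mid-plane. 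Tune the separation so that $C/l\ll\sigma\ll 1/\sqrt l$. The mid-plane then lies in $\tilde U$ at ambient distance about $\sqrt{\sigma/c}>\sqrt{C/l}$ from $\{s\le 0\}$, with no other part of $\mathcal M^n$ nearby. Yet $s=\sigma$ there, so $\psi\approx\sqrt l\,\sigma<1$ and $\tilde h_l\neq g$.

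There are two repairs:
\begin{itemize}
\item Smooth at a scale proportional to $d-\tfrac12$, i.e.\ take a regularized distance $\delta$ on $\{d>\tfrac12\}$ with $c(d-\tfrac12)\le\delta\le C(d-\tfrac12)$ and $|D\delta|\le C$. Note that $d-\tfrac12$ is exactly the ambient distance to $\{d\le\tfrac12\}$.
\item Use the paper's radial cutoff, for which $\{f\circ\rho=0\}$ is an explicit ball.
\end{itemize}
Either repair gives (2) with $(\tilde U)_r$ measured by ambient distance to the level set where $f$ blows up, and that is also all the paper's construction gives. The literal reading with $B(x,r)\cap\mathcal M^n$ fails for both constructions when a piece of $\mathcal M^n$ runs nearly parallel to that level set just inside it.

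A smaller gap is in (1). The step ``a curve exiting $\tilde U$ has $s\to 0$'' presumes that the closure of $U$ in $\mathbb R^{n+1}$ lies in $\mathcal M^n$. Your $d$ only sees $\mathcal M^n\setminus U$, not missing boundary points of a non-complete $\mathcal M^n$, which is the setting of Theorem~\ref{lcmcf}. For instance, let $\mathcal M^n$ be an open flat disk of radius $10$ and $U=\mathcal M^n$. Then $\mathcal M^n\setminus U=\emptyset$, $f\equiv 0$, and $(\tilde U,\tilde h_l)$ is incomplete, although all hypotheses hold. The paper's proof assumes $U\subset\subset\mathcal M^n$. Alternatively, take $d=\mathrm{dist}(\cdot,\overline U\setminus U)$ with $\overline U$ the ambient closure.
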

	\begin{proof}
		Without loss of generality, we can select an open set ~$V=B_2 \subset \mathbb{R}^{n+1}$. 
		Consider the real valued function defined on $V$ such that
		$$x\to \text{max}(0, 1-2d_{g_{edu}}(x,\mathbb{R}^{n+1}\backslash B_{\frac{3}{2}}) ) .$$
		We smooth the real valued function, obtaining a function ~$\rho\in C^\infty(\mathbb{R}^{n+1})$ such that
		\begin{equation*}
			\begin{split}
				&\rho(x) \equiv 0, ~~ x\in B_1,\\
				&\rho (x) \equiv 1, ~~x\in  \partial V,\\ 
				&\left| \nabla \rho \right|~,~\left| \nabla ^2\rho \right|\le C\left( n \right). 
			\end{split}
		\end{equation*}\
		This justifies the choice of $\tilde{V}=\{x\in \mathbb{R}^{n+1}| \rho(x) <1\}.$
		
		Next, we fix ~$0<\varepsilon <1$, where a suitable $\varepsilon$ will be determined later, and define the conformal metric ~$h=e^{2f\circ\rho }g_{edu}$, where $f:[0,1]\to \mathbb{R}_+$ is a $C^\infty$ function obtained by smoothing the following function in a $\delta$-neighborhood of $1-\varepsilon$
		$$  f\left( x \right) =\left\{ \begin{matrix}
			0&      \ \ \ \ 0 \leq x\le 1-\varepsilon ,\\
			-\ln \left( 1-\left( \frac{x-1+\varepsilon}{\varepsilon} \right) ^2 \right)&        \ \ \ 1-\varepsilon \le x<1.\\
		\end{matrix} \right. $$
		Then $f$  satisfies  $f(x)=0$ for~$0 \leq x\leq 1-\varepsilon -\delta$,~$f\left( x \right) =-\ln \left( 1-\left( \frac{x-1+\varepsilon}{\varepsilon} \right) ^2 \right)$~for~$1>x\ge 1-\varepsilon +\delta$, $0<f'\left( x \right) <\frac{2\varepsilon}{\varepsilon ^2-\left( x-1+\varepsilon \right) ^2}$~and~$0<f''\left( x \right) <\frac{4\varepsilon ^2}{\left( \varepsilon ^2-\left( x-1+\varepsilon \right) ^2 \right) ^2}$~for~$1-\varepsilon +\delta < x <1$.
		
		Choose a sequence of points $\{ x_k \}$ such that
		~$$\left\{ x_k\in \tilde{V}\ |\ d_{g_{edu}}\left( x_k,~\mathbb{R}^{n+1} \backslash \tilde{V} \right) = \frac{1}{k} \right\},$$
		where $d_{g_{edu}}$ is the distance function under the Euclidean metric $g_{edu}$. Then $x_k\to \partial \tilde{V}$~for~$k\to \infty .$ Under the choice of a conformal metric~$h$ ~ in~$\tilde{V}$, the distance between ${x_k}$ and the origin $o$ tends to infinity:
		$$d_{h}\left( x_k,o \right) =\left| x_k \right|_{h}=e^{2f\circ \rho}\left| x_k \right|_{g_{edu}}\triangleq P_k\rightarrow +\infty.$$
		Let ~$K_i=\overline{B_{h}\left( o,P_i \right) } \subset \tilde{V}$. Then $K_i\subset K_{i+1}$~for all i. If ~$q_j\notin K_j$,~we have$$d_{h}\left( o,q_j \right) \ge d_{h}\left( o,x_j \right) \rightarrow +\infty.$$
		Therefore $(\tilde{V},h)$~is complete. 
		
		Now we discuss the completion of open set ~$U$~ in hypersurfaces $\mathcal{M}^n$. For any open set~$U\subset \subset \mathcal{M}^n$, there exists an open set~$V \subset \mathbb{R}^{n+1}$ ~ such that~$U\triangleq V\cap \mathcal{M}^n$.~ By conformal transformation for ~$V$~ and denoting ~$\tilde{U}\triangleq \tilde{V}\cap \mathcal{M}^n \subset U$, for a constant $l> C(n)$ we can get an induced metric in~$\tilde{U}$ ~
		$${(\tilde{h}_l)}_{ij}=h_{l}\left< \partial _iX,\partial _jX \right>=e^{2f\circ \rho}g_{edu}\left< \partial _iX,\partial _jX \right>=e^{2f\circ \rho}g_{ij}.$$
		Similarly, we choose a sequence of points $\{x_k\}\in \mathcal{M}^n$~satisfying~$$\left\{ x_k\in \tilde{U}\ |\ d_{g}\left( x_k,~\mathcal{M}^n \backslash \tilde{U} \right) = \frac{1}{k} \right\}.$$
		Then~$\tilde{P_k}\triangleq  |x_k|_{\tilde{h}_l} \rightarrow +\infty$. This implies $(\tilde{U},\tilde{h}_l)$ is complete.
		
		Moreover,~by Proposition~\ref{gx}~$$
		\widetilde{\mathrm{II}}_{ij} = e^{f} (\mathrm{II}_{ij} - \nabla _Nf \cdot g_{ij}) ,$$
		we have
		\begin{equation*}
			\begin{split}
				|\widetilde{\mathrm{II}}|_{\tilde {h}_l}&=e^{-f}|\left( \mathrm{II}-\nabla _Nf\cdot g \right) |_{g} \\
				&\le e^{-f}(\left| \mathrm{II} \right|_{g}+\left| \nabla _Nf\cdot g \right|_{g}) \\
				&\le e^{-f}(\left| \mathrm{II} \right|_{g}+\left| f'\right|\cdot\left| \nabla \rho \right|_{g}\cdot \left| g \right|_{g}) \\
				&\le \frac{C(n)}{\varepsilon }. \\
			\end{split}
			\label{eq:label}
		\end{equation*}
		Fix~$\varepsilon = \frac{C\left( n \right)}{l}
		$~. Therefore~$|\widetilde{\mathrm{II}}(x)|_{\tilde {h}_l}\leq l.$
	\end{proof}
	
	\subsection{Some results on mean curvature flow}
	We now recall some results on mean curvature flow that will be needed later. First, we require the following short-time existence theorem for the mean curvature flow of a complete, non-compact hypersurface in a Riemannian manifold with bounded curvature, assuming that the initial second fundamental form is bounded.
	
	\begin{theorem} \cite{9} \label{zhu}
		Let~$M^n$~be compact or let the curvature operator of~$N^{n+1}$~be bounded. Let~$X_0: M^n \to N^{n+1}$~be a smooth, orientable immersion. Suppose that there exists a constant~$ \rho>0$~such that the second fundamental form of
		~$X_0$~satisfies~$| \mathrm{II}|^2 \leq  \rho^{-2}$. Then there exists a smooth solution~$X(\cdot,t)$ ~ defined on~$M^n\times [0,\gamma(n) \rho^2)$~, which satisfies
		$$
		\left\{ \begin{array}{l}
			\partial _tX\left( x,t \right) =-\operatorname{H}\left( x,t \right) N\left( x,t \right) ,~~x\in M^n,t>0,\\
			X\left( \cdot ,0 \right) =X_0,~~~~~~~~~~~~~~~~~~~~~~~~~~~~x\in M^n,\\
		\end{array} \right. 
		$$
		and$$| \mathrm{II}(t)|^2 \leq  \lambda(n)\rho^{-2},$$
		where~$\gamma(n)$~and~$ \lambda(n)$~are positive constants depending only on the dimension $n$.
	\end{theorem}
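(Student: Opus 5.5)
The statement is the short-time existence theorem cited from \cite{9}, so the plan is to reproduce the standard argument for complete hypersurfaces with bounded second fundamental form in an ambient manifold of bounded geometry.

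First, by parabolic rescaling $X\mapsto \rho^{-1}X$ (and the ambient metric scaled accordingly, which keeps its curvature bounded), we reduce to the case $\rho=1$. The time and curvature bounds then rescale as $\gamma(n)\rho^2$ and $\lambda(n)\rho^{-2}$.

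\textbf{Compact case.} We solve the flow as a quasilinear parabolic equation. We write the evolving hypersurface as a normal graph $u$ over $X_0$, or use the DeTurck trick with a fixed background connection, to make the system strictly parabolic. Standard linear theory and the implicit function theorem give a short-time solution.

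\textbf{Non-compact case.} We exhaust $M^n$ by compact domains $\Omega_k$ and solve a Dirichlet-type problem on each, or approximate $X_0$ by immersions with compact support of deformation. We then need estimates on each approximating flow that are independent of $k$.

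The key a priori estimate comes from the evolution equation \eqref{2}, extended by the ambient curvature terms. These give
$$(\partial_t-\Delta)|\mathrm{II}|^2\le -2|\nabla \mathrm{II}|^2+2|\mathrm{II}|^4+C_0(|\mathrm{II}|^2+1),$$
where $C_0$ depends on bounds for the ambient curvature and its derivative. By ODE comparison, $\phi'=2\phi^2+C_0(\phi+1)$ with $\phi(0)=1$ stays below $2$, say, for $t\le\gamma(n)$. The maximum principle then yields $|\mathrm{II}|^2\le\lambda(n)$.

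On a non-compact manifold this maximum principle needs justification. We would use a localized version: multiply by a cutoff built from the ambient distance function. Its evolution is controlled in the manner of \eqref{sec1:evolution|x|} using the ambient curvature bound. This gives the estimate uniformly on balls, independent of $k$. The obstacle here is that the ambient constant depends on the geometry of $N$, not only on $n$; in the paper's application $N=\mathbb{R}^{n+1}$ or a rescaled conformal ball, so the constants are effectively dimensional.

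Next, Shi-type interior derivative estimates (Bernstein method applied to $|\nabla^m \mathrm{II}|^2$ with cutoffs) give bounds on all derivatives on compact subsets for $t>0$, and near $t=0$ locally by smoothness of $X_0$. Arzel\`a--Ascoli then produces a subsequential limit solving the flow on $M^n\times[0,\gamma\rho^2)$ with $|\mathrm{II}|^2\le\lambda\rho^{-2}$.

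\textbf{Main obstacle.} The main difficulty is the uniform localized curvature estimate for the approximating flows, which also requires controlling how far points move. Since the normal speed satisfies $|\partial_tX|=|H|\le\sqrt{n\lambda}$, displacement stays bounded on $[0,\gamma]$. The cutoff can therefore be chosen with support moving only a bounded amount, which closes the argument.
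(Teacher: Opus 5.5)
The paper gives no proof of this statement; it is quoted from Zhu's lectures. Your sketch therefore has to stand on its own, and in the non-compact case it does not.

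\textbf{The non-compact case.} The step you call the main obstacle is a $k$-independent local bound for $|\mathrm{II}|^2$ on the approximating flows. This cannot come from a cutoff maximum principle applied to $|\mathrm{II}|^2$, because the reaction term is quartic. With a cutoff $\psi$ and $Q=\psi|\mathrm{II}|^2$, at a spatial maximum of $Q$ the best you get is $\partial_t Q\le 2Q^2/\psi+(\text{terms linear in }Q)$. The maximum can sit where $\psi$ is small, so nothing stops $Q$ from blowing up, and replacing $\psi$ by $\psi^m$ only worsens the factor. Cutoff localization works for quantities whose reaction term is at most linear or has a good sign, such as $|X|^2+2nt$ or the gradient function of a graph. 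It does not work for $|\mathrm{II}|^2$.

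You also cannot fall back on a global maximum principle on $\Omega_k$, since nothing in your sketch controls $|\mathrm{II}|$ near the fixed boundary $\partial\Omega_k$. The displacement bound $|\partial_tX|\le\sqrt{n\lambda}$ does not help either. It is circular, since it presupposes the estimate being proved, and it only controls where the cutoff is supported, not the superlinearity.

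What is missing is a genuinely local curvature estimate, i.e.\ a pseudolocality-type input. The standard way to supply it is Ecker--Huisken's argument:
\begin{itemize}
\item Since $|\mathrm{II}|\le\rho^{-1}$ at $t=0$, each point has an intrinsic neighbourhood that is a graph with gradient at most $\tfrac12$ over a disc of radius $c(n)\rho$. This holds in $\mathbb{R}^{n+1}$; in $N$ one works in normal coordinates under bounded geometry.
\item The gradient function satisfies $(\partial_t-\Delta)v=-|\mathrm{II}|^2v-2v^{-1}|\nabla v|^2$ (plus ambient terms), and this quantity does localize.
\item $G=|\mathrm{II}|^2\varphi(v^2)$ with $\varphi(r)=r/(1-kr)$ satisfies an inequality containing the absorbing term $-2kG^2$. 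That term is what lets the cutoff argument close.
\item A continuity argument keeps the graph representation for time $c\rho^2$.
\end{itemize}
A Shi-type Dirichlet problem for the normal-graph function would also serve, with $C^0$ and $C^1$ bounds coming from the zero boundary data, as would Chen--Yin's pseudolocality theorem.

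Only once you have a limit solution with bounded curvature does your ODE comparison become legitimate, via the maximum principle on complete manifolds; restarting then gives the full interval. In the compact case you likewise need the continuation criterion (the flow extends while $|\mathrm{II}|$ stays bounded) to pass from short-time existence to $[0,\gamma\rho^2)$.

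\textbf{The constants.} Your caveat about the constants is not a technicality: as quoted, the statement is false in a general ambient, even for compact $M$. In the unit sphere $S^{n+1}$, the geodesic sphere at distance $\frac{\pi}{2}-s_0$ from a point has $|\mathrm{II}|^2=n\tan^2 s_0=:\rho^{-2}$. Under the flow $\dot s=n\tan s$, so the sphere collapses at time $\frac1n\ln\frac{1}{\sin s_0}$. For small $s_0$ this is far less than $\gamma\rho^2\approx\gamma/(n s_0^2)$.

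Hence $\gamma$ and $\lambda$ must depend on bounds for the ambient curvature and its covariant derivative, or $\rho$ must be small relative to the ambient curvature scale. The covariant derivative enters the evolution of $\mathrm{II}$ and is not among the stated hypotheses. As you note, this is harmless for the paper's use in $\mathbb{R}^{n+1}$.
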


	Second, we need the following localized version of Huisken’s umbilic estimate due to Langford~\cite[Theorem~1]{31} with $m=0$ (also see Proposition~1 in \cite{14}).
	
	\begin{lemma} \label{umblic} \cite{31}   
		Assume that the mean curvature flow~$X(t):M^n\to \mathbb{R}^{n+1}$~is proper and $\mathrm{II}(x,t)\geq \alpha \operatorname{H}(x,t)g(x,t)$~holds for all~$(x,t)\in B_{2L}\times(0,T]$.~Then
		$$
		|\mathrm{II}-\frac{1}{n}\operatorname{H}g|\le \varepsilon \operatorname{H}+C_{\varepsilon}\varTheta, 
		~(x,t)\in B_{\frac{L}{2}}\times(0,T],$$  
		where~$
		\varTheta =\sup\limits_{B_{2L} \times \left\{ 0 \right\} \bigcup{B_{2L}\backslash B_{L} \times \left( 0,T \right)}}\operatorname{H}
		$ and $
		C_{\varepsilon}=C\left( n,\alpha ,\varepsilon \right) 
		$.
	\end{lemma}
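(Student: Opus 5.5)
The plan is to run Huisken's Stampacchia-iteration proof of the umbilic estimate in a localized form. The cutoff comes from the ambient function $|X|^2$, since \eqref{sec1:evolution|x|} gives $(\partial_t-\varDelta)|X|^2=-2n$. Write $\mathring{\mathrm{II}}=\mathrm{II}-\frac1n\operatorname{H}g$. Since $\mathrm{II}\ge\alpha\operatorname{H}g$ forces $\operatorname{H}\ge0$ and $|\mathrm{II}|\le C(n,\alpha)\operatorname{H}$, the claimed estimate is equivalent to a bound of the form
$$|\mathring{\mathrm{II}}|^2\le \varepsilon^2\operatorname{H}^2+C_\varepsilon\Theta^2 .$$
Following Huisken, I would work with
$$f_\sigma=\frac{|\mathring{\mathrm{II}}|^2}{\operatorname{H}^{2-\sigma}}\quad\text{(where }\operatorname{H}>0\text{)},$$
truncated as $(f_\sigma-k)_+$ with $k=C\,\Theta^{\sigma}$. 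With this choice the truncated function vanishes on the parabolic boundary piece $B_{2L}\times\{0\}$ after rescaling. On the lateral piece $(B_{2L}\setminus B_L)\times(0,T)$ the truncation does not vanish; there it is the cutoff that handles the region, while $\Theta$ controls $\operatorname{H}$ on that set.

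The first step is the pointwise computation. From \eqref{2} and \eqref{h},
$$(\partial_t-\varDelta)f_\sigma\le \frac{2(1-\sigma)}{\operatorname{H}}\langle\nabla \operatorname{H},\nabla f_\sigma\rangle-\frac{\delta}{\operatorname{H}^{2-\sigma}}|\nabla \mathrm{II}|^2+\sigma|\mathrm{II}|^2 f_\sigma ,$$
with $\delta=\delta(n,\alpha)>0$; this uses the Kato-type inequality $|\nabla\mathrm{II}|^2\ge\frac{3}{n+2}|\nabla\operatorname{H}|^2$. Simons' identity then gives
$$\varDelta|\mathring{\mathrm{II}}|^2\ge 2\langle\mathring{\mathrm{II}},\nabla^2\operatorname{H}\rangle+2Z ,$$
and the pinching $\mathrm{II}\ge\alpha\operatorname{H}g$ yields $Z\ge n\alpha^2\operatorname{H}^2|\mathring{\mathrm{II}}|^2$. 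Integrating this against $\operatorname{H}^{-2+\sigma}$ produces the Poincaré-type inequality
$$\int \operatorname{H}^2 f_\sigma^p\,\phi \le C\int\Big(\frac{p}{\operatorname{H}^{2-\sigma}}|\nabla\mathrm{II}|^2f^{p-1}_\sigma+|\nabla\phi|\cdots\Big),$$
which is what lets the good $|\nabla\mathrm{II}|^2$ term absorb the bad reaction term $\sigma|\mathrm{II}|^2f_\sigma$ once $\sigma\lesssim p^{-1/2}$.

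Next come the localized $L^p$ bounds. Take $\phi=\big((4L^2-|X|^2-2nt)_+\big)^{m}$, normalized by scaling to $L=1$; this is supported in $B_{2L}$, is a subsolution, and has $|\nabla\phi|^2\le C\phi^{(m-1)/m}\cdot\phi^{\dots}$. Differentiating $\int (f_\sigma-k)_+^p\phi\,d\mu$ in time, the pointwise evolution and the Poincaré inequality control every term. The cutoff-derivative terms are bounded by Hölder/Young, and on the annulus $|\mathrm{II}|\lesssim\operatorname{H}\le\Theta$ is used. The conclusion is a uniform bound on $\int(f_\sigma-k)_+^p\phi$ over $(0,T]$ in terms of $L$ and $\Theta$, for $p$ large and $\sigma\le c(n,\alpha)p^{-1/2}$.

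The last step is the sup bound. Run Stampacchia's iteration (or De Giorgi) on the level sets $A(k)=\{f_\sigma>k\}\cap\operatorname{supp}\phi$, using the Michael--Simon Sobolev inequality on each $\mathcal{M}_t$; this gives $f_\sigma\le k_0$ on $B_{L/2}\times(0,T]$. Therefore
$$|\mathring{\mathrm{II}}|^2\le k_0\operatorname{H}^{2-\sigma}\le \varepsilon^2\operatorname{H}^2+C_\varepsilon\Theta^2 ,$$
by Young's inequality with scaling in $\Theta$. I expect this step to be the main obstacle. The Sobolev inequality carries the weight $\int\operatorname{H}^2$, and the area $|A(k)|$ has to be controlled locally. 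Making everything scale-invariant so that only $\Theta$ enters, rather than an a priori curvature bound, requires tracking the $\operatorname{H}$-powers carefully. Properness of the flow is what makes the localized integrals finite.
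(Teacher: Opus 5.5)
Your localization step does not work as written. The paper gives no proof of this lemma; it simply quotes Langford~\cite{31} (Theorem~1 with $m=0$), so your outline has to stand on its own.

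\textbf{The lateral truncation claim is false.} You claim that on the lateral piece $(B_{2L}\setminus B_L)\times(0,T)$ the truncation $(f_\sigma-k)_+$ does not vanish, and you compensate with $\phi=((4L^2-|X|^2-2nt)_+)^m$. Under $\mathrm{II}\ge\alpha\operatorname{H}g\ge0$ all principal curvatures are nonnegative, so $|\mathring{\mathrm{II}}|^2\le(1-\frac1n)\operatorname{H}^2$ and hence $f_\sigma\le\operatorname{H}^\sigma$. It follows that $f_\sigma\le\varTheta^\sigma\le k$ on the entire set over which $\varTheta$ is taken, the lateral annulus included.

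\textbf{The cutoff cannot reach the claimed time range.} Your $\phi$ vanishes identically once $t\ge 2L^2/n$, and it already vanishes at points of $B_{L/2}$ once $t\ge 15L^2/(8n)$. Every $\phi$-weighted bound is therefore empty on $B_{L/2}\times[2L^2/n,T]$. At best your argument gives the estimate for $t\lesssim L^2$. The lemma asserts it on all of $B_{L/2}\times(0,T]$ with $C_\varepsilon=C(n,\alpha,\varepsilon)$, independent of $T$ and $L$.

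In addition, $\nabla\phi$ is generally nonzero throughout $\operatorname{supp}\phi$, not only on the annulus. So the cutoff error terms inside $B_L$ are not controlled by $\varTheta$ as you suggest. They would have to be absorbed using $\operatorname{H}\ge k^{1/\sigma}$ on $\operatorname{supp}(f_\sigma-k)_+$.

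\textbf{The missing idea is the observation above.} For every $k\ge\varTheta^\sigma$, the function $v_k=(f_\sigma-k)_+$ vanishes at $t=0$ and on $B_{2L}\setminus B_L$ for all times. Hence $\operatorname{supp}v_k(\cdot,t)\subset\mathcal{M}_t\cap\overline{B_L}$, which is compact by properness. Huisken's integrations by parts, the Simons-based Poincar\'e inequality and the Michael--Simon inequality then apply to $v_k$ directly on $\mathcal{M}_t\cap B_{2L}$, with no cutoff and no restriction on $t$. This is what the particular choice of $\varTheta$ is tailored to.

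\textbf{The step you flag as the main obstacle is still missing.} To end with $f_\sigma\le C(n,\alpha,\sigma,p)\,\varTheta^\sigma$, two quantities entering Stampacchia's lemma must be controlled by $\varTheta$ alone, uniformly in $T$ and in $L\varTheta$: the $L^p$ bounds, and the space-time measure $\int_0^T|A(k)\cap\mathcal{M}_t|\,dt$. Some ingredients are available: convexity bounds the area of $\mathcal{M}_t\cap B_{2L}$ by $C(n)L^n$, and $\operatorname{H}>k^{1/\sigma}$ on $A(k)$. But an additional argument is needed for the time integrals. Your sketch only produces a bound ``in terms of $L$ and $\varTheta$''. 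A truncation level depending on $L\varTheta$ or $T/L^2$ does not turn into a constant $C_\varepsilon(n,\alpha,\varepsilon)$ in your final Young-inequality step.
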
 
	Finally, the following proposition asserts that there exist no locally uniformly convex, pinched mean curvature flow translators or expanders.
	\begin{prop}\label{no_existence_te} \cite{14}  
		There exist no non-flat mean curvature flow translators or expanders for which
		\[
		\mathrm{II}(x) \geq \alpha \operatorname{H}(x)\, g \geq 0
		\]
		holds with some constant $\alpha > 0$.
	\end{prop}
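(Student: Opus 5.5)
The plan is to argue by contradiction, using the localized umbilic estimate Lemma~\ref{umblic} to show that such a soliton would be totally umbilic. A connected totally umbilic hypersurface in $\mathbb{R}^{n+1}$ is a piece of a hyperplane or of a round sphere. Neither a non-flat translator nor a non-flat expander can be a sphere: spheres shrink under the flow rather than translating or expanding. So this would give the result.

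The key point is that a soliton generates a mean curvature flow defined on a long time interval. The translator gives the eternal flow $\mathcal{M}_t=\mathcal{M}+te$. The expander gives $\mathcal{M}_t=\sqrt{t}\,\mathcal{M}$ for $t>0$. Both flows are self-similar, so the pinching $\mathrm{II}\ge \alpha \operatorname{H} g\ge 0$ holds at every time. First I would reduce to the strictly convex case: by the strong maximum principle applied to \eqref{h}, either $\operatorname{H}\equiv 0$, in which case $\mathrm{II}\equiv 0$ by pinching and the soliton is flat, or $\operatorname{H}>0$ everywhere. I would then apply Lemma~\ref{umblic} on $B_{2L}\times[t_0,T]$, with the flow restarted at time $t_0$, and let the parameters go to infinity so that $\varTheta\to 0$. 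This yields $|\mathrm{II}-\frac1n \operatorname{H}g|\le \varepsilon \operatorname{H}$ at every point for every $\varepsilon>0$, hence umbilicity.

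The main obstacle is proving $\varTheta\to 0$, i.e.\ that $\operatorname{H}$ decays at spatial infinity on the soliton. For \emph{expanders}, I would use the identity $\operatorname{H}=-\frac{1}{2}\langle X,N\rangle$ together with convexity and properness. The hypersurface is then a convex graph over its asymptotic cone, so $|\mathrm{II}|\lesssim \operatorname{H}\lesssim |X|^{-1}$ far out. I would take $t_0=1$ and $T=2$. On the annulus $B_{2L}\setminus B_L$ over times $[1,2]$ this gives $\operatorname{H}\lesssim 1/L$. The initial slice $B_{2L}\times\{1\}$ is the troublesome term, since near the tip $\operatorname{H}$ is of order one. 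I would handle this by parabolic rescaling: fix a point $x$ and apply the lemma to the rescaled flow $\lambda^{-1}\mathcal{M}_{\lambda^2 t}$ with $\lambda\to\infty$. Then the initial slice is nearly the cone, on which $\operatorname{H}\to0$ away from the vertex, and the cone-tip contribution is excluded by a slight shift of the time interval.

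For \emph{translators}, I would use $\operatorname{H}=-\langle N,e\rangle\le 1$, together with the fact that a proper, convex, pinched translator is an entire convex graph over a hyperplane orthogonal to $e$. I would then show that $\operatorname{H}\to 0$ along the translation direction, by comparison with the bowl soliton or by the gradient decay $|\nabla u|\to\infty$ of the graph function. Finally I would apply Lemma~\ref{umblic} on $[-S,0]$ with $S,L\to\infty$ chosen so that $B_{2L}\times\{-S\}$ and the annulus only see regions where $\operatorname{H}$ is small. I expect this decay step for translators to be the hardest part.
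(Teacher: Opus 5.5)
The paper gives no proof of this proposition; it is quoted from \cite{14}. Judged on its own, your argument has a gap that no choice of parameters can fix. Lemma~\ref{umblic} can never show that a translator or an expander is umbilic, because $\varTheta$ is always at least the mean curvature at the point you want to control.

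For a translator, $\mathcal{M}_t=\mathcal{M}+te$. For $q\in\mathcal{M}$, the space-time curve $t\mapsto(q+te,t)$ lies on the flow and carries the constant value $\operatorname{H}(q)$. Take any cylinder $B_{2L}(c)\times[t_0,t_1]$ with $q+t_1e\in B_{L/2}(c)$. There are two cases:
\begin{itemize}
\item $q+t_0e\in B_{2L}(c)$, so the curve meets the initial slice;
\item the curve leaves $B_{2L}(c)$, so it crosses $B_{2L}(c)\setminus B_L(c)$ at some intermediate time.
\end{itemize}
In both cases $\varTheta\ge\operatorname{H}(q)$. For an expander, $\mathcal{M}_t=\sqrt{t}\,\mathcal{M}$, and the curve $t\mapsto(\sqrt{t}\,q,t)$ carries $\operatorname{H}(q)/\sqrt{t}$, which only grows as $t$ decreases. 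The same dichotomy gives $\varTheta\ge\operatorname{H}$ at the target point.

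Hence the lemma yields at best $|\mathrm{II}-\frac1n\operatorname{H}g|\le(\varepsilon+C_\varepsilon)\operatorname{H}$. Making $\varTheta$ small relative to $\operatorname{H}$ is impossible, however well $\operatorname{H}$ decays at spatial infinity. So the obstacle you singled out is not the real one. Parabolic rescaling does not help either: both sides of the estimate scale the same way, and $\lambda^{-1}\mathcal{M}_{\lambda^2 t}=\mathcal{M}_t$ for an expander. This is also why the paper needs the proposition at all. In the proof of Lemma~\ref{dey} it uses the proposition to exclude translator blow-ups. Only afterwards does it apply Lemma~\ref{umblic}, to a Type~I ancient flow whose curvature decays backwards in time along such curves, which is exactly what solitons lack.

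What is missing is an argument that combines the soliton equation itself with the pinching. For translators this is short.
\begin{itemize}
\item The identity $\operatorname{H}=-\langle e,N\rangle>0$ makes $\mathcal{M}$ the graph of a convex $u$ over a convex domain $\Omega\subset e^\perp$, with $u\to\infty$ at $\partial\Omega$.
\item With $W=\sqrt{1+|Du|^2}$ one has $\operatorname{H}=1/W$ and $\mathrm{II}_{ij}=u_{ij}/W$. The pinching therefore reads $D^2u\ge\alpha(I+Du\otimes Du)$.
\item Along any unit-speed segment, $f(r)=u(x+rv)$ satisfies $f''\ge\alpha(1+f'^2)$. So $\arctan f'$ increases at rate at least $\alpha$, and $\operatorname{diam}\Omega\le\pi/\alpha$.
\item Convexity gives $|Du|\ge(h-\min u)/\operatorname{diam}\Omega$ on $\{u=h\}$. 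The principal curvatures of the level sets in $e^\perp$ are at most $(W/|Du|)\operatorname{H}$, so they tend to $0$ uniformly.
\item For $n\ge2$ these level sets are closed convex hypersurfaces. Their widths are at least twice the reciprocal of the maximal curvature, which is impossible inside the bounded $\Omega$.
\end{itemize}
The restriction $n\ge2$ is essential, and your argument should record where it is used: for $n=1$ the grim reaper is a pinched translator. For expanders you need an analogous argument starting from $\nabla\operatorname{H}=-\frac12\mathrm{II}(X^T)$. With the pinching, this forces $\operatorname{H}$ to decay like $e^{-\alpha|X|^2/4}$ along integral curves of $X^T$, and one must still convert that decay into flatness. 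None of this is reached by the umbilic-estimate route.
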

	
	\subsection{ Point-picking lemma}
	The following lemma provides a geometric containment estimate for mean curvature flow. It will be used as a key ingredient in the proof of Lemma~\ref{pp} below.
	
	\begin{lemma}\label{ssq}
		Suppose that the mean curvature flow ~$X(t):M^n\to \mathbb{R}^{n+1}$~satisfies $ | \mathrm{II}(x,t) |^2\leq f(t)$ in ~$  Q^{r,0}(x_0)\times (0,T]$. Then
		\begin{equation}
			Q^{r-2\sqrt{n}\int_0^t{\sqrt{f(t)}dt}, t}(x_0)\subset Q^{r,0}(x_0),
		\end{equation}
		\textbf{Here and below, we use the notation~$  Q^{r,t}(p)\doteq B(X_t(p),r)\cap \mathcal{M}^n_t $ for any $p\in M^n.$  }	
	\end{lemma}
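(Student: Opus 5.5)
The plan is to bound the speed of each point. Along the flow \eqref{eqmcf}, every point moves with normal velocity $\partial_t X = -\operatorname{H}N$. Since $|\operatorname{H}|\le \sqrt{n}\,|\mathrm{II}|$ by Cauchy--Schwarz, we get
$$|\partial_t X(p,t)| \le \sqrt{n}\,|\mathrm{II}(p,t)| \le \sqrt{n}\sqrt{f(t)}$$
as long as $X(p,s)$ stays in the region where the curvature hypothesis holds. Integrating in time gives the displacement bound
$$|X(p,t)-X(p,0)|\le \sqrt{n}\int_0^t\sqrt{f(s)}\,ds.$$
This is the only analytic ingredient; the remaining work is bookkeeping on which points the hypothesis applies to. I interpret the hypothesis as $|\mathrm{II}(p,s)|^2\le f(s)$ for all $p$ with $X_0(p)\in Q^{r,0}(x_0)$ and all $s\in(0,T]$. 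In other words, the set $Q^{r,0}(x_0)$ is identified with its preimage in $M^n$ and followed along the flow, as the notation $Q^{r,t}(p)$ indicates.

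Fix $t\in(0,T]$ and set $\delta(t)=\sqrt{n}\int_0^t\sqrt{f(s)}\,ds$. Take $q$ with $X_t(q)\in Q^{r-2\delta(t),t}(x_0)$, so that $|X_t(q)-X_t(x_0)|<r-2\delta(t)$. The goal is to show $|X_0(q)-X_0(x_0)|<r$.

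The centre $x_0$ itself lies in the controlled region, so its trajectory satisfies $|X_t(x_0)-X_0(x_0)|\le\delta(t)$. For $q$ I would run a continuity argument in time. Let $s^*$ be the supremum of the times $s\le t$ such that $|X_0(q)-X_0(x_0)|<r$ and the displacement bound holds for $q$ on $[0,s]$. On $[0,s^*]$ the bound gives $|X_s(q)-X_0(q)|\le\delta(s)$. Combining the three estimates by the triangle inequality,
$$|X_0(q)-X_0(x_0)|\le |X_0(q)-X_t(q)|+|X_t(q)-X_t(x_0)|+|X_t(x_0)-X_0(x_0)|<\delta(t)+r-2\delta(t)+\delta(t)=r.$$
This gives $X_0(q)\in Q^{r,0}(x_0)$, which is the containment
$$Q^{r-2\sqrt{n}\int_0^t\sqrt{f(t)}dt,\,t}(x_0)\subset Q^{r,0}(x_0),$$
understood after identifying points via the flow parametrization $X_t(q)\leftrightarrow X_0(q)$.

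The main obstacle is making the a priori control on $q$ rigorous, since we do not know in advance that $q$ started inside $Q^{r,0}(x_0)$. I would handle this by the standard open--closed argument. Consider the set of $q\in M^n$ with $X_0(q)\in B(X_0(x_0),r)$ and let $\Omega$ be the connected component of this set containing $x_0$. For points of $\Omega$ the speed bound holds on all of $(0,T]$, so $X_t(\partial\Omega)$ stays at distance at least $r-2\delta(t)$ from $X_t(x_0)$. Hence the component of $X_t^{-1}\big(B(X_t(x_0),r-2\delta(t))\big)$ containing $x_0$ cannot cross $\partial\Omega$, and therefore lies in $\Omega$. If $Q$ is read as the connected component through $x_0$, which is the natural reading for the later local arguments, this completes the proof.
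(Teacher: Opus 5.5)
Your analytic core is the same as the paper's. The paper sets $Y_t=X_t(x)-X_t(x_0)$, uses $|\operatorname{H}|\le\sqrt{n}\,|\mathrm{II}|$ at both endpoints to get $\partial_t|Y_t|^2\ge -4\sqrt{nf(t)}\,|Y_t|$, and integrates to $|Y_0|\le |Y_t|+2\sqrt{n}\int_0^t\sqrt{f}$. Your bound of each trajectory by $\delta(t)$ plus the triangle inequality is the same estimate with the same constant. The difference is that the paper applies $|\operatorname{H}(x,s)|\le\sqrt{nf(s)}$ to an arbitrary $x$ with $X_t(x)$ in the small ball and never checks that $x$ lies in the controlled set. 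That is exactly the point you flag, and your extra topological step is what makes the argument rigorous.

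You should drop the continuity argument with $s^*$. The condition $|X_0(q)-X_0(x_0)|<r$ does not depend on $s$, so the set defining $s^*$ is either empty or already contains the conclusion; the argument is circular. Your final connected-component argument is the real proof, and it is sound. Add two small points: every $p\in\partial\Omega$ satisfies $|X_0(p)-X_0(x_0)|=r$ and inherits the displacement bound by continuity, and if $\partial\Omega=\emptyset$ the inclusion is trivial.

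The component reading is not just convenient; it is necessary. Take the literal definition $Q^{r,t}(p)=B(X_t(p),r)\cap\mathcal{M}^n_t$, with the hypothesis imposed on points starting in $Q^{r,0}(x_0)$. Consider a static hyperplane through $X_0(x_0)=0$ together with a disjoint spherical cap $\{|x|=R_1,\ x_{n+1}>\eta\}$, where $0<\eta<R_1$ and $R_1>r$, shrinking radially. Then $Q^{r,0}(x_0)$ is a flat disk, so $f\equiv 0$ is admissible. Yet the cap enters $B(0,r)$ once $R_1^2-2nt<r^2$, and the containment fails.

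So your proposal proves the correct version of the lemma. The paper's proof is valid only when the curvature bound is known along every trajectory involved, for instance when it holds on the whole domain of the flow, as in the proof of Theorem \ref{lcmcf}. Alternatively, if the hypothesis is imposed on a fixed spatial ball, i.e.\ at all $(x,s)$ with $X_s(x)\in B(X_0(x_0),r)$, a first-exit-time argument proves the literal statement without any connectedness.
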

	\begin{proof}
		Let $Y_t(x)=X_t(x)-X_t(x_0)$.
		Since \begin{equation*}
			\begin{split}
				\partial _t\left| Y_t \right|^2&=2\left< \partial _tY_t,Y_t \right> \\
				&=-2\left<\operatorname{H}(x,t)N(x,t)-\operatorname{H}(x_0,t)N(x_0,t) ,Y_t \right> \\
				&\ge -2\left|Y_t\right|(\left|\operatorname{H}(x,t)\right|+\left|\operatorname{H}(x_0,t)\right|)\\
				&\ge -4\sqrt{nf(t)}\left|Y_t\right|.\\ 
			\end{split}
			\label{eq:label}
		\end{equation*}
		
		Integrating both sides yields
		\begin{equation*}
			\begin{split}
				\int_0^t{\partial _t(2|Y_t|)dt} &\ge -4\sqrt{n}\int_0^t{\sqrt{f(t)}dt},\\
				\left( |Y_t|-|Y_0| \right) &\ge -2\sqrt{n}\int_0^t{\sqrt{f(t)}dt},\\
				|Y_0|&\le |Y_t|+2\sqrt{n}\int_0^t{\sqrt{f(t)}dt}.
			\end{split}
		\end{equation*}
		That is
		\begin{equation*}
			Q^{r-2\sqrt{n}\int_0^t{\sqrt{f(t)}dt}, t}(x_0)\subset Q^{r,0}(x_0).
		\end{equation*}
		
	\end{proof}
	
	The following lemma asserts that, under suitable conditions, a mean curvature flow either exhibits time decay of its second fundamental form or, equivalently, admits local control of the norm of the second fundamental form. The proof is based primarily on Perelman’s point-picking technique.
	
	\begin{lemma}\label{pp}
		For any ~$n>2$~ ~$(\beta =4\sqrt{n})$, $a >0$ and mean curvature flow ~$X(t):M^n\to \mathbb{R}^{n+1}$ ~$(t\in [0,T]
		)$~, if ~$x_0\in M^n$~ satisfies ~$Q^{1, 0}\left(x_0 \right) \subset \subset \mathcal{M}^n$, then at least one of the following assertions holds:
		
		(1)~For~$t\in (0,T]$ with $t<\frac{1}{\beta ^2a}$,~it holds that~$Q^{1-4\sqrt{nat}, t}\left( x_0 \right) \subset Q^{1,0}\left( x_0\right) $~and$$~~~| \mathrm{II} \left(x,t\right)  |^2 < at^{-1}~~~throughout ~~ Q^{1-4\sqrt{nat},t}\left( x_0 \right).$$
		
		(2)~There exists~$\tilde{t}\in \left( \left. 0,T \right] \right.$~for~$\tilde{t}<\frac{1}{\beta ^2a}$~and~$X_{\tilde{t}}(\tilde{x})\in Q^{1-4\sqrt{na\tilde t},{\tilde t}  }\left( x_0 \right) $~such that~$$A^2=\left| \mathrm{II}\left(\tilde x, \tilde t \right) \right|^2\geq a\tilde{t}^{-1},$$and$$
		\left| \mathrm{II}\left( x,t \right)  \right|^2\le 4A^2=4\left| \mathrm{II}\left( \tilde{x},\tilde{t} \right)  \right|^2
		,$$
		whenever~$X_{ \tilde{t} } (x)\in Q^{\frac{\beta}{8}aA^{-1} ,{ \tilde{t} }  }\left( \tilde{x} \right) $,~$\tilde{t}-\frac{1}{8}aA^{-2}\le t\le \tilde{t}
		.$
	\end{lemma}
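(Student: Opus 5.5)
We argue by contradiction and use Perelman's point-picking iteration. Suppose assertion (1) fails. We will produce a point $(\tilde x,\tilde t)$ satisfying (2).

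\textbf{Step 1: find a first bad point.} Let $t_*\le T$ with $t_*<\frac{1}{\beta^2 a}$ be a time for which (1) fails. Take the infimum of the times $t\in(0,t_*]$ at which the bound $|\mathrm{II}|^2<at^{-1}$ is violated somewhere in $Q^{1-4\sqrt{nat},t}(x_0)$. For every earlier time $s$ the bound $|\mathrm{II}|^2< a s^{-1}$ then holds on the shrinking regions. Lemma \ref{ssq} with $f(s)=a/s$ gives $\int_0^t\sqrt{a/s}\,ds=2\sqrt{at}$, hence
\[
Q^{1-4\sqrt{nat},t}(x_0)\subset Q^{1,0}(x_0).
\]
So the containment part of (1) holds automatically up to that time, and only the curvature bound can fail. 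This yields $(x_1,t_1)$ with $X_{t_1}(x_1)\in Q^{1-4\sqrt{nat_1},t_1}(x_0)$ and $|\mathrm{II}(x_1,t_1)|^2\ge a t_1^{-1}$. Properness of $Q^{1,0}(x_0)\subset\subset\mathcal M^n$ guarantees that the infimum is attained and that the bad set is closed.

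\textbf{Step 2: iterate.} Write $A_k=|\mathrm{II}(x_k,t_k)|$. If $(x_k,t_k)$ already satisfies the doubling conclusion of (2), we stop. Otherwise there is $(x_{k+1},t_{k+1})$ with
\[
t_k-\tfrac18 aA_k^{-2}\le t_{k+1}\le t_k,\qquad X_{t_k}(x_{k+1})\in Q^{\frac\beta8 aA_k^{-1},t_k}(x_k),\qquad A_{k+1}>2A_k .
\]
Then $A_k^2\ge 4^{k} A_1^2\ge 4^k a/t_1$. The total time backtracked is at most $\sum_k \frac18 aA_k^{-2}\le \frac{t_1}{8}\sum_k 4^{-k}<\frac{t_1}{6}$, so every $t_k$ stays in $(0,t_1]$. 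Since $|\mathrm{II}|$ is bounded on the compact closure of $Q^{1,0}(x_0)\times[t_1/2,t_1]$, the sequence $A_k\to\infty$ is impossible and the iteration terminates.

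\textbf{Step 3: keep the points in the admissible region (the main obstacle).} We must show that each $(x_k,t_k)$ remains in the parabolic region $Q^{1-4\sqrt{nat_k},t_k}(x_0)$ with $A_k^2\ge a t_k^{-1}$, and also remains inside the region where Lemma \ref{ssq} applies.
\begin{itemize}
\item The curvature condition is immediate: $A_k^2\ge A_1^2\ge a/t_1\ge a/t_k$, because $t_k\le t_1$.
\item For the spatial position, the spatial jumps sum to at most $\sum_k\frac{\beta}{8}aA_k^{-1}\le \frac{\beta}{8}\sqrt{at_1}\sum_k2^{-k}$.
\item The spatial drift of points between the times $t_{k+1}$ and $t_k$ is controlled by the curvature bound $4A_k^2$ valid on the previous parabolic neighbourhood. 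It contributes at most $2\sqrt n\cdot 2A_k\cdot\frac18 aA_k^{-2}$, which is again $O(\sqrt{at_1}\,2^{-k})$.
\end{itemize}
With $\beta=4\sqrt n$, the sum of these terms must be compared with the slack $4\sqrt{na}(\sqrt{t_1}-\sqrt{t_k})$, or more precisely with the amount by which $X_{t_1}(x_1)$ lies inside the allowed ball. This comparison is delicate. The remedy I would try first is to run the iteration on the larger regions $Q^{1-2\sqrt{nat},t}$ and use the fact that the first bad time minimises over these regions, which gives additional margin.

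When the iteration stops at some $(\tilde x,\tilde t)=(x_k,t_k)$, this point satisfies all the requirements of (2), and the proof is complete.
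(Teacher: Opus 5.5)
\noindent\textbf{Gap.} Step 3 is not merely a delicate comparison. As you have set it up, it cannot be closed, for two reasons.

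First, the drift estimate has no basis. Your iteration only tells you where $X_{t_k}(x_{k+1})$ is, but the next neighbourhood is centred at $X_{t_{k+1}}(x_{k+1})$. Relating the two needs a curvature bound along $s\mapsto X_s(x_{k+1})$ for $s\in[t_{k+1},t_k]$. That path lies in the parabolic neighbourhood of $(x_k,t_k)$, which is exactly where the bound $4A_k^2$ has just failed: at $s=t_{k+1}$ the curvature is $A_{k+1}>2A_k$. So the bound $2\sqrt n\cdot 2A_k\cdot\frac18 aA_k^{-2}$ is unjustified. For the same reason your termination argument has no fixed compact set known to contain the $x_k$.

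Second, the invariant you want to propagate cannot hold once the iteration moves back in time. You need $X_{t_k}(x_k)\in Q^{1-4\sqrt{nat_k},t_k}(x_0)$ together with $|\mathrm{II}(x_k,t_k)|^2\ge a/t_k$. But if $t_k<t_1$, minimality of $t_1$ forces every point with $|\mathrm{II}|^2\ge a/t$ at a time $t<t_1$ to lie outside $Q^{1-4\sqrt{nat},t}(x_0)$. Taking the first bad time over the larger regions $Q^{1-2\sqrt{nat},t}(x_0)$, as you suggest, runs into the same obstruction there.

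\noindent\textbf{Missing idea.} The paper avoids both problems by changing the selection criterion (its Assertion (3)). You look for $(\tilde x,\tilde t)$ with $A^2\ge a/\tilde t$ and $\tilde r:=|X_{\tilde t}(\tilde x)-X_{\tilde t}(x_0)|+\frac{\beta a}{4}A^{-1}\le 1$ such that $|\mathrm{II}(x,t)|^2\le 4A^2$ at \emph{every} $(x,t)$ with $t\le\tilde t$, $X_t(x)\in Q^{\tilde r,t}(x_0)$ and $|\mathrm{II}(x,t)|^2\ge a/t$.
\begin{itemize}
\item The comparison regions are balls about $X_t(x_0)$ at each time. So a violating point $(x',t')$ is automatically located at its own time, $|X_{t'}(x')-X_{t'}(x_0)|<\tilde r$, and no drift ever has to be estimated during the iteration.
\item The radii grow by $\frac{\beta a}{4}A_k^{-1}$ with $A_{k+1}>2A_k$. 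Hence $\tilde r_k<|X_{t_1}(x_1)-X_{t_1}(x_0)|+\frac{\beta a}{2}A_1^{-1}\le 1-\frac12\beta\sqrt{at_1}$.
\item Doubling plus boundedness of $|\mathrm{II}|$ on $Q^{1,t}(x_0)$ forces the process to terminate.
\item Lemma~\ref{ssq} is used only once, at the end. By then $|\mathrm{II}|^2\le 4A^2$ holds on $Q^{\tilde r,t}(x_0)$ for $t\in[\frac78\tilde t,\tilde t]$; points with $|\mathrm{II}|^2<a/t$ satisfy it because $a/t\le\frac87A^2$. This places the neighbourhood in (2) inside that region.
\end{itemize}

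\noindent\textbf{Caveat.} This argument only gives $|X_{\tilde t}(\tilde x)-X_{\tilde t}(x_0)|<1-2\sqrt{na\tilde t}$, which is exactly your enlarged region. It does not give the $Q^{1-4\sqrt{na\tilde t},\tilde t}(x_0)$ written in (2), and the paper's claim of the stronger localization does not follow from its iteration. By the minimality observation above, the stronger version can hold only with $\tilde t=t_1$.

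It appears it can genuinely fail (a sketched example, not written out in full detail). Take:
\begin{itemize}
\item a static hyperplane through $X_0(x_0)$;
\item a small shrinking sphere inside the ball whose $|\mathrm{II}|^2$ first reaches $a/t$ at $T=t_1$;
\item a much smaller shrinking sphere that lies outside $Q^{1-4\sqrt{nat},t}(x_0)$ whenever its $|\mathrm{II}|^2\ge a/t$, sits within $\frac\beta8\sqrt{at_1}$ of every point of the first sphere at time $t_1$, and has $|\mathrm{II}|^2>4a/t_1$ there.
\end{itemize}
For $a$ large there is room to keep this embedded, and then neither (1) nor (2) holds. So the right target is (2) with the weaker localization. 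That weaker form is all Lemma~\ref{dey} uses.
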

	\begin{proof}
		
		First, we claim the following.
		
		\textbf{Assertion (3):} There exists $\tilde{t} \in (0, T]$ with $\tilde{t} < \frac{1}{\beta^2 a}$ and $X_{\tilde{t}}(\tilde{x}) \in Q^{1 - 4\sqrt{na\tilde{t}},{\tilde{t}}  }\left(x_0 \right)$ such that
		\[
		\left| \mathrm{II}(\tilde{x}, \tilde{t}) \right|^2 \geq a \tilde{t}^{-1},
		\]
		and
		\[
		\tilde{r} = \left| X_{{\tilde{t}}}(\tilde{x}) - X_{{\tilde{t}}}(x_0) \right| + \frac{\beta a}{4} \left| \mathrm{II}(\tilde{x}, \tilde{t}) \right|^{-1} \leq 1 - \frac{1}{4}\beta \sqrt{a\tilde{t}} \leq 1,
		\]
		but
		\[
		\left| \mathrm{II}(x, t) \right|^2 \leq 4 \left| \mathrm{II}(\tilde{x}, \tilde{t}) \right|^2
		\]
		for all $t \in (0, \tilde{t}]$, $X_t(x) \in Q^{\tilde{r},{t}}(x_0)$, and $\left| \mathrm{II}(x, t) \right|^2 \geq a t^{-1}$.\\
		Next, we will prove that $X(t)$  satisfies either Assertion (1) or Assertion (3).

		Due to the smoothness of the mean curvature flow, we can choose a sufficiently small $t > 0$ such that 
		\[
		X_t(x) \in Q^{1 - 4\sqrt{nat},t }\left(x_0 \right) \subset \subset \mathcal{M}_t
		\]
		and 
		\[
		\left| \mathrm{II}(x, t) \right|^2 < a t^{-1}.
		\]
		In addition by Lemma~\ref{ssq},~we have~$Q^{1 - 4\sqrt{nat},t }\left(x_0 \right)\subset Q^{1,0}(x_0)$.~ If ~$X(t)$ doesn't satisfy Assertion (1), then there exists the first time ~$t_1\in (0,T]$~for~$t_1<\frac{1}{\beta ^2a}$~and point~$X_{t_1}(x_1)\in \overline{Q^{1-4\sqrt{nat_1},{ t_1}}\left(x_0 \right) }$~such that  ~$$\left| \mathrm{II}\left( x_1,t_1 \right)  \right|^2= at_1^{-1}.$$
		If ~$(x_1,t_1)$~ is the point required by Assertion (3), then Lemma $\ref{pp}$ is immediately established. Otherwise, there exists $(x_2,t_2)\in M\times(0,t_1]$ such that $$\left| \mathrm{II}\left( x_2,t_2 \right) \right|^2\ge \frac{a}{t_2},
		$$~
		$$
		\left| \mathrm{II}\left( x_2,t_2 \right) \right|^2\ge 4\left| \mathrm{II}\left( x_1,t_1 \right) \right|^2,
		$$
		and also satisfying
		\begin{equation}
			\begin{split}
				\left| X_{t_2}(x_2)-X_{t_2}(x_0) \right|&<\left| X_{t_1}(x_1)-X_{t_1}(x_0) \right|+\frac{\beta a}{4}\left| \mathrm{II}\left( x_1,t_1 \right) \right|^{-1}\\
				&\le \left( 1-\beta \sqrt{at_1} \right) +\frac{1}{4}\beta \sqrt{at_1}\\
				&=1-\left( 1-\frac{1}{4} \right) \beta \sqrt{at_1}\\
				&\le 1-\frac{3}{4}\beta \sqrt{at_2}.
			\end{split}
			\label{eq:label}
		\end{equation}

		Similarly,~if ~$(x_2,t_2)$~is the one required in Assertion~(3),~or if not, we can find $(x_3,t_3)\in M\times(0,t_2]$ satisfying
		~$$
		\left| \mathrm{II}\left( x_3,t_3 \right) \right|^2\ge \frac{a}{t_2},
		$$
		$$
		\left| \mathrm{II}\left( x_3,t_3 \right) \right|^2\ge 4\left| \mathrm{II}\left( x_2,t_2 \right) \right|^2,
		$$
		and
		\begin{equation}
			\begin{split}
				\left| X_{t_3}(x_3)-X_{t_3}(x_0) \right|&<\left| X_{t_2}(x_2)-X_{t_2}(x_0) \right|+\frac{\beta a}{4}\left| \mathrm{II}\left( x_2,t_2 \right) \right|^{-1}\\
				&\le \left| X_{t_1}(x_1)-X_{t_1}(x_0) \right|+\frac{\beta a}{4}\left| \mathrm{II}\left( x_1,t_1 \right) \right|^{-1}+\frac{1}{2}\frac{\beta a}{4}\left| \mathrm{II}\left( x_1,t_1 \right) \right|^{-1}\\
				&=1-( 1-\frac{1}{4} -\frac{1}{8}  ) \beta \sqrt{at_1}\\
				&\le 1-\frac{5}{8}\beta \sqrt{at_3}.
			\end{split}
			\label{eq:label}
		\end{equation}
		
		By iterating this procedure, we obtain $t_k\in (0,T]$~for~$t_k<\frac{1}{\beta ^2a}$~and point~$x_k\in M$~satisfying~$$
		\left| \mathrm{II}\left( x_k,t_k \right) \right|^2\ge \frac{a}{t_k},
		$$
		$$
		\left| \mathrm{II}\left( x_k,t_k \right) \right|^2\ge 4^{k-1}\left| \mathrm{II}\left( x_1,t_1 \right) \right|^2,
		$$~
		and
		\begin{equation}
			\begin{split}
				\left| X_{t_k}(x_k)-X_{t_k}(x_0) \right|&<1-(1-\frac{1}{4}-\frac{1}{8}-...-\frac{1}{2^k})\beta \sqrt{at_1}\\
				&\le1- \frac{1}{2}\beta \sqrt{at_k}
			\end{split}
			\label{eq:label}
		\end{equation}
		for large enough k.
		
		The uniform boundedness of the second fundamental form in ~$
		Q^{1,t}\left(x_0 \right) \times \left[ 0,T \right] 
		$~ ensures that finite iterations of this process will necessarily yield a point fulfilling Assertion (3).
		
		We now prove that Assertion (2) follows from Assertion (3). Indeed, the point $(\tilde{x},\tilde{t})$ satisfying Assertion (3) is precisely the point where Assertion (2) holds.
		Since ~$
		A^2=\left| \mathrm{II}\left( \tilde{x},\tilde{t} \right) \right|^2\ge \frac{a}{\tilde{t}}
		$ . Then for ~$t\in \left[ \tilde{t}-\frac{1}{8}aA^{-2},~\tilde{t} \right] 
		$,~it follows that~$
		\frac{7}{8}\tilde{t}\le t
		$.
		Hence, if Assertion (2) fails to hold at certain points ~$(x,t)$~, it follows necessarily that$$
		\left| \mathrm{II}\left( x,t \right) \right|^2>4A^2\ge \frac{4a}{\tilde{t}}\ge \frac{7a}{2t}\ge \frac{a}{t}
		.$$
		This contradicts Assertion (3).
		
		It remains to show the inclusion ~$
		Q^{\frac{a\beta}{8}A^{-1},{\tilde{t}}}\left(\tilde{x} \right) \subset Q^{\tilde{r},t}\left(x_0\right) 
		$. This follows by applying Lemma~\ref{ssq} to the curvature estimates derived from Assertion (3) in ~$Q^{\tilde{r},t}\left(x_0\right)$~.
		By applying Lemma~\ref{ssq},~we get$$
		Q^{\tilde{r}-\beta A\left( \tilde{t}-t \right),{\tilde{t}}}\left(x_0 \right) \subset Q^{\tilde{r},t}\left(x_0\right) 
		.$$
		Since~$
		\frac{7}{8}\tilde{t}\le t
		$,~we conclude $$
		\tilde{r}-\beta A\left( \tilde{t}-t \right) \ge \tilde{r}-\frac{1}{8}\beta aA^{-1}=\left| X_{\tilde{t}}\left( \tilde{x} \right) -X_{\tilde{t}}\left( x_0 \right) \right|+\frac{a\beta}{8}A^{-1}
		.$$
		This shows that$$
		Q^{\frac{a\beta}{8}A^{-1},{\tilde{t}}}\left(\tilde{x} \right) \subset Q^{\left| X_{\tilde{t}}\left( \tilde{x} \right) -X_{\tilde{t}}\left( x_0 \right) \right|+\frac{a\beta}{8}A^{-1},{\tilde{t}}}\left(x_0 \right) \subset Q^{\tilde{r},t}\left(x_0 \right) 
		,$$
		as required.
	\end{proof}
	
	\section{A Priori Estimates}\label{section3}
	\subsection{ Preserving pinching}
	First, we prove that the pinching condition $\mathrm{II}(x,0) \geq \alpha \operatorname{H}(x,0) g(x,0) > 0$ is almost preserved under a local mean curvature flow, provided that the growth condition $|\mathrm{II}(x,t)|^2 \leq a t^{-1}$ is satisfied along the flow.
	\begin{lemma}\label{weakpc}
		For mean curvature flow~$X(t):M^n\to \mathbb{R}^{n+1}$ with $Q^{1,0}(x_0)\subset\subset \mathcal{M}^n_t=X_t(M^n)$~$(t\in[0,T))$, if there exist $a>0$ and $\alpha>0$~such that:
		
		(1)~~$\mathrm{II}(x,0)\geq \alpha \operatorname{H}(x,0)g(x,0)>0 ~~~\text{in}~~ Q^{1,0}(x_0),$
		
		(2)~~$|\mathrm{II}(x,t)|^2\leq at^{-1}~~~\text{in}~~~ Q^{1,0}(x_0) \times  (0,T],$
		
		\noindent  then there exists a constant~$ S(a,\alpha)>0$~such that \begin{eqnarray*} \mathrm{II}(x_0,t)\geq (\alpha \operatorname{H}(x_0,t)-1)g(x_0,t), \end{eqnarray*} for ~$t\in [0,T\land S],$ where $T\land S \doteq \text{min} ~\{T,S \} $. 
		
	\end{lemma}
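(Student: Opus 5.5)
We will prove the statement via a localized maximum principle for a pinching quantity, in the spirit of Lee--Topping's "almost preservation" argument for Ricci pinching. Consider the symmetric 2-tensor $S_{ij}=\mathrm{II}_{ij}-\alpha \operatorname{H} g_{ij}$. We would like $S\ge -g$ at $x_0$ for short time. Using \eqref{g}, \eqref{2} and \eqref{h}, $S$ satisfies
$$(\partial_t-\varDelta)S_{ij}=|\mathrm{II}|^2 S_{ij}-2\operatorname{H}\mathrm{II}^2_{ij}+2\alpha \operatorname{H}^2\mathrm{II}_{ij},$$
where the last term comes from $\partial_t g$. The reaction term satisfies the null-eigenvector condition for Hamilton's tensor maximum principle. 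If $S(v,v)=0$ for a unit vector $v$ which is an eigenvector of $\mathrm{II}$ with eigenvalue $\lambda=\alpha \operatorname{H}$, then the reaction term evaluated on $v$ is
$$-2\operatorname{H}\lambda^2+2\alpha \operatorname{H}^2\lambda=2\operatorname{H}\lambda(\alpha \operatorname{H}-\lambda)=0 .$$
This is why the pinching cone is preserved for compact flows. Because our flow is only local, we cannot use the maximum principle directly. Instead we will control how negative $S$ can become, in the form $S\ge -\varphi\, g$ with a barrier $\varphi$ involving a spatial cutoff and time.

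\textbf{Step 1 (localization).} Take a cutoff $\eta$ built from $|X-X_t(x_0)|^2$, or more conveniently from $|X|^2+2nt$, which is caloric by \eqref{sec1:evolution|x|}. Choose it so that $\eta$ vanishes outside $Q^{1,0}(x_0)$ (for $t\le S$, using Lemma~\ref{ssq} together with hypothesis (2) to control the motion of points) and $\eta=1$ near $x_0$. Consider the smallest eigenvalue $\mu(x,t)$ of $S$, and aim to show
$$\mu \ge -\frac{C(a,\alpha)\sqrt t}{\eta}$$
or a similar bound. At $x_0$ this gives $\mu\ge -C\sqrt t\ge -1$ once $t\le S(a,\alpha)$.

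\textbf{Step 2 (bounding the reaction term at a negative eigenvalue).} Away from the null case, the reaction term evaluated at the minimizing eigenvector $v$ with $S(v,v)=\mu<0$ has the form
$$|\mathrm{II}|^2\mu+2\operatorname{H}\lambda(\alpha \operatorname{H}-\lambda), \qquad \lambda=\alpha \operatorname{H}+\mu .$$
This simplifies to
$$|\mathrm{II}|^2\mu-2\operatorname{H}\lambda\mu=\mu\,\bigl(|\mathrm{II}|^2-2\operatorname{H}\lambda\bigr).$$
Hypothesis (2) gives $|\operatorname{H}|,|\lambda|\le C\sqrt{a/t}$, so the coefficient is bounded by $C(n)a/t$, and the reaction is at least $-C a|\mu|/t$. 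This $1/t$ singularity in the coefficient is the main obstacle, since it is not integrable near $t=0$. To get around it, I would first use hypothesis (1) together with the fact that the flow is smooth and locally bounded up to $t=0$: near $t=0$ the flow is controlled qualitatively, and only the quantitative dependence matters.

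A cleaner route is to work with a scale-invariant quantity that is linear in the curvature. We aim for a bound of the form
$$\mu\ge -\,\frac{\epsilon(t)}{\eta}\,\bigl(|\mathrm{II}|+1\bigr),$$
where the extra factor of $|\mathrm{II}|$ absorbs the $1/t$ term. Alternatively, use the barrier $\varphi=e^{Kt}\,(\text{stuff})$ with $K$ allowed to depend on $a$, together with the gradient terms from $\eta$. The terms $|\nabla\eta|^2/\eta$ and $\varDelta\eta$ are bounded using $|X|$-cutoffs, since $\varDelta |X|^2=2n-2\operatorname{H}\langle X,N\rangle$ and the $\operatorname{H}$ term is bounded by $\sqrt{a/t}$, which is integrable in $t$.

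\textbf{Step 3 (maximum principle argument).} Apply the maximum principle at a first time where $\eta\mu+\varphi$ touches zero. There we combine:
\begin{itemize}
\item the reaction bound from Step 2,
\item the $\sqrt{a/t}$-integrable cutoff errors from Step 1,
\item the initial condition $\mu\ge0$ on $Q^{1,0}(x_0)$ coming from hypothesis (1),
\end{itemize}
to derive a contradiction for $t\le S(a,\alpha)$ with a suitable $\varphi(t)\to0$ as $t\to0$.

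If the $a/t$ coefficient cannot be absorbed this way, the fallback is a rescaling and compactness argument. Suppose the statement fails along a sequence of flows with times $t_k\to0$. Parabolically rescale by $t_k$; hypothesis (2) becomes $|\mathrm{II}|^2\le a/t$ on larger and larger regions. Since $S\ge -g$ fails at scale $t_k$, after rescaling we get $\mu\le -1/\sqrt{t_k}\cdot\sqrt{t_k}$-type normalized negativity at time $1$. The limit is a flow on $(0,1]$ with $|\mathrm{II}|^2\le a/t$, coming out of a cone-like initial datum satisfying the pinching condition. For that limit the exact pinching $\mathrm{II}\ge\alpha\operatorname{H}g$ should hold, by the null-eigenvector argument on an entire flow whose curvature is bounded for $t\ge\delta$. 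This contradicts the persistence of uniform negativity in the limit.
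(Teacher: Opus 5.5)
Your setup matches the paper's. The defect $\mu_-=\max\{0,-\mu\}$, with $\mu=k_1-\alpha\operatorname{H}$, satisfies $(\partial_t-\varDelta)\mu_-\le L\mu_-$ in the barrier sense with $L=|\mathrm{II}|^2\le a/t$. One localizes with the caloric quantity $|X|^2+2nt$, and the non-integrable $a/t$ coefficient is the real obstacle. But there is a genuine gap: you never supply the idea that beats this coefficient, and the remedies you list fail.

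\begin{itemize}
\item An exponential factor $e^{Kt}$ only absorbs a bounded reaction coefficient; no $K$ dominates $a/t$ near $t=0$.
\item Your Step~1 target $\mu\ge -C\sqrt t/\eta$ gives a supersolution only if $a\le\frac12$, since $\partial_t\sqrt t=\frac{1}{2t}\sqrt t$.
\item Normalizing by $|\mathrm{II}|$ does not work. The inequality $(\partial_t-\varDelta)|\mathrm{II}|\le|\mathrm{II}|^3$ is an upper bound, which is the wrong direction for $\mu/|\mathrm{II}|$ when $\mu<0$. Dividing by $\operatorname{H}$ would remove the reaction term, but then you need $\operatorname{H}>0$ and control of the drift $2\nabla\log\operatorname{H}$ against the cutoff, and you address neither.
\end{itemize}

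The paper's key step is a time barrier that vanishes to high order at $t=0$. It compares $\varPhi^m\mu_-$, with $\varPhi=e^{-ct}\phi(|X_t|^2+2nt)$, against $\psi(t)=\frac12 t^l$ with $l>a$. (The text writes $l\ge\alpha+1$, but the computation requires $l>a$.)
\begin{itemize}
\item This barrier gives the margin $\psi'-L\psi\ge\frac{l-a}{2}t^{l-1}$.
\item That margin absorbs the cutoff error at a first touching point. The error carries the coefficient $|\nabla\varPhi|^2/\varPhi^2$, which is unbounded at the edge of the support; it is not $|\nabla\varPhi|^2/\varPhi$ as you suggest.
\item At the touching point, $\varPhi^{-m}=\mu_-/\psi\le C\sqrt{a/t_0}/\psi$. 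Hence the error is $O(t_0^{\,l-(2l+1)/m})=o(t_0^{\,l-1})$ once $m>2l+1$.
\item This forces $t_0\ge\tau(n,\alpha,a)$. Since $\varPhi(x_0,t)$ stays bounded below for small $t$, it gives $\mu_-(x_0,t)\le Ct^l\le1$ for $t\le S$.
\end{itemize}
To start the comparison one needs $\varPhi^m\mu_-<\frac12t^l$ near $t=0$. The paper gets this from a preliminary qualitative bootstrap, first with barrier $t^{1/2}+\delta$ and then with $\delta t^{1/4}+t^k$. It uses only that the flow is smooth with bounded curvature up to $t=0$ on the compact region. Your remark about qualitative control near $t=0$ points at this step, but without the $t^l$, $l>a$, barrier there is nothing quantitative for it to feed into.

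The compactness fallback does not close the gap either. The conclusion $\mu(x_0,t)\ge-1$ is not scale invariant. If $\mu(x_0,t_k)<-1$ with $t_k\to0$, parabolic rescaling by $t_k^{-1/2}$ gives only $\tilde\mu(x_0,1)<-\sqrt{t_k}\to0$. So no uniform negativity survives in the limit, and there is nothing to contradict. Moreover, hypothesis (2) gives no uniform control at $t=0$. The limit is therefore a flow on $(0,1]$ with $|\mathrm{II}|^2\le a/t$ emerging from possibly singular data. Your null-eigenvector argument on $[\delta,1]$ needs pinching at time $\delta$, and propagating it from $t=0$ meets the same non-integrable $a/t$ coefficient. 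That step is circular.
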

	
	\begin{proof}
		
		For all~$(x,t)\in M^n\times [0,T)$,~we define
		$$\lambda (x,t)=~\operatorname{inf}\{s\ge 0 :k_1(x,t)-\alpha \operatorname{H}(x,t)+s>0\},$$
		where $k_1 \le ... \le k_n$ are the principal curvatures of the evolving hypersurface $\mathcal{M}^n_t$. 
		Note that:\\ 
		(a)~$\lambda (x,0) =0~~\text{in}~~ Q^{1,0}(x_0),$\\
		(b)~$\lambda (x,t)\leq C\sqrt{at^{-1}}~~~\text{in}~~ Q^{1,0}(x_0)\times(0,T),$~and by ~\eqref{2}~and~\eqref{h}, we obtain
		\begin{equation*}
			\left( \partial _t-\varDelta \right) \left( \mathrm{II}-\alpha \operatorname{H}g \right) \\
			=\left| \mathrm{II} \right|^2\left( \mathrm{II}-\alpha \operatorname{H}g \right).\\
		\end{equation*}
		This shows that 
		\begin{equation}\label{barrier}
			( \partial _t-\varDelta )  \lambda \\
			\leq L\lambda \\ ~~~~~~~\text{in~the~barrier~sense},
		\end{equation}
		where $C= C(n,\alpha)$ is a constant and $L : Q^{1,0}(x_0)\times \left[ 0,T \right] \to R $ ~is continuous with~$L(x,t) \le at^{-1}$.~
		
		We now apply the local maximum principle. From \eqref{sec1:evolution|x|}, we obtain
		$$ ( \partial _t-\varDelta )(\left| X_t \right|^2+2nt)=0
		.$$
		Define~$\phi :[0,\infty)\to [0,1]$~by
		$$\phi \left( s \right) =\left\{ \begin{array}{l}
			\begin{matrix}
				1&      \ \ \ \ \ \ \ \ \ \ \ \ \ \ \ \ 0\le s\le \frac{1}{2},\\
			\end{matrix}\\
			\begin{matrix}
				\exp \left( -\frac{1}{1-s} \right)&     \\
			\end{matrix}\ \frac{3}{4}\le s\le 1,\\
			\begin{matrix}
				0&      \ \ \ \ \ \ \ \ \ \ \ \ \ \ \ \ \ \ \ \ s\ge 1,\\
			\end{matrix}\\
		\end{array} \right. $$
		(smoothly interpolated on $(\frac{1}{2},\frac{3}{4})$). There exists~$c>0$ with ~$\phi '\le 0$,~$\phi ''\ge -c\phi$. The cutoff function ~$\varPhi ( x,t ) =e^{-ct}\phi ({| X_t(x) |^2+2nt} )$ satisfies \begin{eqnarray*}
			\left( \partial _t-\bigtriangleup \right) \varPhi \le 0~~~\text{on} ~M^n\times[0,T].
		\end{eqnarray*}
		We define the function~$$F(x,t)=-\varPhi{^m}(x,t)\lambda(x,t)+\eta(t),$$where $m$ is a positive integer to be determined later and
		$\eta$ is chosen such that~$F>0$ near~$t=0$. If $F(x,t)<0$ at some point in~$ Q^{1,0}(x_0)\times [0,T]$, there must exist a point~$(X_{t_0}(p_0),t_0)\in Q^{1,0}(x_0)\times (0,T]$~satisfying:\\ 
		(1)~$F(p_0,t_0)=0$,\\
		(2)~~$F(x,t)>0$ in $ Q^{1,0}(x_0)\times [0,t_0).$\\
		
		From \eqref{barrier},~for any~$\varepsilon >0$,~there exists a~$C^2$~function~$\zeta(x)$~defined in a neighborhood of~$p_0$~such that:\\ 
		(1)~$\zeta\leq \lambda(x,t_0)$,~\\
		(2)~$\zeta(p_0)=\lambda (p_0,t_0)$,~\\
		(3)~the following inequality holds at ~$p_0$~
		$$ \frac{\partial _-}{\partial t}\lambda \left( p_0,t_0 \right) -\varDelta \zeta \left( p_0 \right) -L(p_0,t_0) \zeta \left( p_0 \right) \le \varepsilon.$$
		
		Let ~$G(x,t)=-\varPhi^m(x,t)\zeta(x)+\eta(t)$. The function $G$~is $C^2$~and satisfies ~$G(p_0,t_0)=F(p_0,t_0)=0$. Since ~$\lambda (x,t_0)>0$~and ~$\lambda$~is continuous,   for all $x$ sufficiently close to $p_0$, we obtain the following inequality:
		$$G(x,t_0)\ge -\varPhi{^m}(x,t_0)\lambda(x,t_0)+\eta(t_0)=F(x,t_0)\ge0.$$
		
		Because $(p_0,t_0)$~is the minimum point of $G$,~ the following conditions holds:  $$\left\{ \begin{array}{l}
			\zeta =\frac{\eta}{\varPhi ^{m}},\\
			\varPhi\nabla \zeta =-{m\zeta \nabla \varPhi }.\\
		\end{array} \right. $$
		Moreover, $F>0$ for all $t< t_0$ and $F(p_0,t_0)=0$~(the first vanishing point) imply
		$$\frac{\partial _-}{\partial t}F\left| _{\left( p_0,t_0 \right)} \right.\le 0.$$
		It follows that
		\begin{eqnarray*}
			0&\le& \varDelta G\\ 
			&=&-\varPhi ^m\varDelta \zeta -\zeta \varDelta \varPhi ^m-2\left< \nabla \varPhi ^m,\nabla \zeta \right> \\
			&=&-\varPhi ^m\varDelta \zeta -m\zeta \varPhi ^{m-1}\varDelta \varPhi -m\left( m-1 \right) \zeta \varPhi ^{m-2}\left| \nabla \varPhi \right|^2-2m\varPhi ^{m-1}\left< \nabla \varPhi ,\nabla \zeta \right>\\ 
			&\le& \varPhi ^m\left( -\frac{\partial _-}{\partial _t}\lambda +L(p_0,t_0)\lambda +\varepsilon \right) +m\lambda \varPhi ^{m-1}\left( -\frac{\partial _-}{\partial t}\varPhi \left( p_0,t_0 \right) \right) -2m\varPhi ^{m-1}\left< \nabla \varPhi ,\nabla \zeta \right> \\
			&=&\frac{\partial _-}{\partial t}F-\eta '+\varPhi _{}^{^m}\left( L(p_0,t_0)\lambda +\varepsilon \right) +2m^2\varPhi ^{m-2}\zeta \left| \nabla \varPhi \right|^2\\ 
			&\le& -\eta '+L(p_0,t_0)\eta +\varepsilon \varPhi _{}^{^m}+2m^2\eta \frac{\left| \nabla \varPhi \right|^2}{\varPhi ^2}.\\
		\end{eqnarray*}
		Let~$L_0=\max _{Q^{1,0}(x_0)\times \left[ 0,T \right]}L$,~
		$a_0=\max _{Q^{1,0}(x_0)\times \left[ 0,T \right]}|\lambda |$. At the point~$( p_0,t_0)$,~the following holds:
		$$
		\frac{1}{\varPhi _{}^{m}}=\frac{\lambda}{\eta}\le \min \left\{ \frac{a_0}{\eta \left( t_0 \right)},~C\frac{\sqrt{a}}{\sqrt{t_0}\eta \left( t_0 \right)} \right\} 
		.$$
		Taking the limit as~$\varepsilon \to 0$,~we obtain the following inequality at the point ~$( p_0,t_0)$~
		\begin{equation}
			\begin{split}
				\eta '\left( t_0 \right) &\le \eta \left( t_0 \right) \left( L(p_0,t_0) +2m^2\frac{|\nabla \varPhi |^2}{\varPhi _{}^{2}\left( t_0 \right)} \right) \\
				&\le \left\{ \begin{array}{c}
					\eta \left( t_0 \right) \left( L_0+C^{2}m^2\left( \frac{a_0}{\eta \left( t_0 \right)} \right) ^{\frac{2}{m}} \right) ,\ \text{or}\\
					\eta \left( t_0 \right) \left( at{_0^{-1}}+C^{2}m^2\left( \frac{a}{t_0\eta \left( t_0 \right)} \right) ^{\frac{1}{m}} \right),\\
				\end{array} \right. 
			\end{split}
			\label{lamda}
		\end{equation}
		where we continue to denote by $C$ a generic constant depending only on the dimension $n$ and the parameter $\alpha$.
		
		We begin by establishing estimates for ~$\lambda$. First, we prove that ~$\lambda \left( t \right) =O\left( t^{1/2} \right) $.~
		For any ~$1>{\delta }>0$,~let~${\eta }=t^{1/2}+\delta $. Then~$F>0$~near~$t=0$. Moreover, from the first inequality in~\eqref{lamda},~we have$$
		\frac{1}{2}t_0^{-\frac{1}{2}}\le \left( t_{0}^{\frac{1}{2}}+\delta \right) \left( L_0+\frac{C^{2}m^2a_{0}^{\frac{2}{m}}}{\left( t_{0}^{\frac{1}{2}}+\delta \right) ^{\frac{2}{m}}} \right) 
		.$$
		
		Let $m=2$. The preceding inequality implies that $t_0$ admits a uniform positive lower bound $\tau >0$ ( independent of $\delta$ ).~Consequently, as~$\delta \to 0$,~the following estimate holds for all $(x,t)$ with $X_t(x)\in 
		Q^{\sqrt{\frac{1}{2}-2nt},t}\left(x_0  \right) 
		$ and $t$ sufficiently small:~$$
		\lambda \le 2t^{\frac{1}{2}}
		.$$

		We proceed to derive refined estimates for $\lambda~(t \to 0^+)$. Let~$k\ge 1$~be a positive integer and~$\delta >0$, and define the auxiliary function~$
		\eta =\delta t^{\frac{1}{4}}+t^k
		$. From the first inequality in~\eqref{lamda},~we have  
		$$
		\frac{1}{4}\delta t_{0}^{-\frac{1}{4}}+kt_{0}^{k-1}\le \left( \delta t_{0}^{\frac{1}{4}}+t_{0}^{k} \right) \left( L_0+\frac{C^{2}m^2a_{0}^{\frac{2}{m}}}{\left( \delta t_{0}^{\frac{1}{4}}+t_{0}^{k} \right) ^{\frac{2}{m}}} \right) 
		.$$
		Select $m$ sufficiently large so that~$
		\frac{2k}{m}<1$. Then there exists ~$\tau_1 >0$ such that $t_0>\tau_1$.~Consequently in a neighborhood of $t=0$,~the estimate~$$
		\lambda \le 2t^{k}
		$$holds for all~$X_t(x)\in 
		Q^{\sqrt{\frac{1}{2}- 2nt},t}\left(x_0 \right) 
		$. 
		
		We now show that for each~$l\ge \alpha +1$,~there exists $\tau_2(n,\alpha,l)>0$such that $t_0>\tau_2$. Let~$
		\eta =\frac{1}{2}t^l
		$. $F>0$~near~$t=0$.~By the second inequality in~\eqref{lamda},~we can conclude that$$
		lt_{0}^{l-1}\le t_{0}^{l}\left( \frac{a}{t_0}+C^{2}m^2\left( \frac{a}{t_{0}^{l+1}} \right) ^{\frac{1}{m}} \right) .
		$$
		That is     $$
		t_{0}^{l-1}\le C^{2}m^2a ^{\frac{1}{m}}t_{0}^{l-\frac{1}{m}\left( l+1 \right)}
		.$$
		Fix $m$ sufficiently large such that~$
		\frac{1}{m}\left( l+1 \right) <\frac{1}{2}
		$. Then ~$
		t_0\ge \tau_2\left( n,\alpha ,l \right) 
		$,~and consequently, the estimate $$
		\lambda \left( x_0,t \right) \le t^l
		$$holds. Set~$l=\alpha +2$. By choosing a sufficiently small constant $S(a, \alpha)$, we obtain that for all $t < S(a, \alpha)$ the following inequality holds:
		\[
		\mathrm{II}(x_0, t) - \alpha \operatorname{H}(x_0, t)g(x_0,t) \geq -\lambda g(x_0,t) \geq -t^lg(x_0,t) \geq -g(x_0,t).
		\]
		
	\end{proof}

	\subsection{Time-decay estimate}  
	Using the estimate in Lemma~\ref{umblic} for mean curvature flow, we derive an $\frac{a}{t}$ estimate for the second fundamental form.
	
	\begin{lemma}\label{dey}
		For any $\alpha>0$,~suppose a non-compact hypersurface~$X:M^n\to \mathbb{R}^{n+1}$~evolving under mean curvature flow satisfies:
		
		(1)~$Q^{1,t}(x_0)\subset\subset \mathcal{M}_t$ for $t\in [0,T),$
		
		(2)~$\mathrm{II}\ge(\alpha\operatorname{H}-1)g$ in $Q^{1,t}(x_0)\times[0,T),$
		
		
		\noindent Then there exists ~$a(\alpha,n)>0$, $S_1(\alpha,n)>0$ such that
		\begin{eqnarray*} 
			|\mathrm{II}(x_0,t)|^2\leq at^{-1}   ~~~~ \text{for}  ~~~t\in (0,T\land S_1].
		\end{eqnarray*} 
	\end{lemma}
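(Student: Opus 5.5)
Argue by contradiction via blow-up, using the point-picking Lemma~\ref{pp} to produce a rescaled sequence of flows with bounded curvature. Suppose no such $a$ and $S_1$ exist. Then there are flows $X_k(t)$ satisfying (1) and (2), points $x_{0,k}$, and times $t_k\to 0$ with $|\mathrm{II}(x_{0,k},t_k)|^2> a_k t_k^{-1}$, where $a_k\to\infty$.

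We first replace $a_k$ by a smaller sequence, still tending to infinity, so that $t_k<1/(\beta^2a_k)$. Assertion (1) of Lemma~\ref{pp} then fails at $t_k$, since $x_{0,k}$ lies in $Q^{1-4\sqrt{na_kt_k},t_k}(x_{0,k})$. Hence assertion (2) holds. It gives points $(\tilde x_k,\tilde t_k)$ with $A_k^2=|\mathrm{II}(\tilde x_k,\tilde t_k)|^2\ge a_k\tilde t_k^{-1}$ and $|\mathrm{II}|^2\le 4A_k^2$ on the backward parabolic region of spatial radius $\tfrac{\beta}{8}a_kA_k^{-1}$ and time length $\tfrac18 a_kA_k^{-2}$.

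Rescale parabolically by $A_k$ about $(X_{\tilde t_k}(\tilde x_k),\tilde t_k)$: set $\hat X_k(s)=A_k\bigl(X_k(\tilde t_k+A_k^{-2}s)-X_k(\tilde t_k)(\tilde x_k)\bigr)$.
\begin{itemize}
\item The rescaled flows have $|\hat{\mathrm{II}}|\le 2$ on $B_{\beta a_k/8}\times[-a_k/8,0]$, with $|\hat{\mathrm{II}}|=1$ at the origin at time $0$. These domains exhaust space-time because $a_k\to\infty$.
\item The pinching (2) becomes $\hat{\mathrm{II}}\ge(\alpha\hat H-A_k^{-1})\hat g$, and $A_k^{-1}\to0$ since $A_k^2\ge a_k/\tilde t_k\to\infty$.
\item Because curvature is bounded, interior (Ecker--Huisken type) derivative estimates hold. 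The hypersurfaces are locally graphs over balls of uniform size, and the flow is proper inside $Q^{1,t}$ by (1).
\end{itemize}
Passing to a subsequence, we obtain a smooth limit: an ancient, complete, properly embedded flow $\hat X_\infty$ on $(-\infty,0]$. It has $|\hat{\mathrm{II}}|\le 2$, $|\hat{\mathrm{II}}|(0,0)=1$, and satisfies $\hat{\mathrm{II}}\ge\alpha\hat H\hat g\ge0$. In particular it is convex and not flat.

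It remains to rule out this limit, and I expect this to be the main obstacle. Proposition~\ref{no_existence_te} excludes translators and expanders, not general ancient solutions. So I would first apply Lemma~\ref{umblic} to $\hat X_\infty$ on balls $B_{2L}$ with $L\to\infty$. Since $\hat H\le 2\sqrt n$, we have $\Theta\le C$. On $B_{L/2}$ this gives $|\hat{\mathrm{II}}-\frac1n\hat H\hat g|\le\varepsilon\hat H+C_\varepsilon\Theta$.

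To gain smallness, I would apply the estimate to the $k$-th flows at the original scale, before taking the limit. There the relevant supremum of $H$ on the parabolic boundary, over the cylinder of size $\sim1$ and times $[0,\tilde t_k]$, is $\Theta_k$; after rescaling it becomes $\Theta_k/A_k$. The initial contribution $\sup_{t=0}H$ is bounded locally, since $X_k(0)$ is smooth on $Q^1$. The difficulty is that the lateral boundary contribution is not obviously bounded. To handle it I would work at a fixed intermediate radius and use the pinching (2), which bounds $H$ from below by an almost-positive quantity and makes the hypersurface almost convex. In the limit one then gets $\hat{\mathrm{II}}=\frac1n\hat H\hat g$, so the limit is umbilic.

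A complete, non-flat, umbilic, properly embedded hypersurface is a round sphere, or a shrinking sphere as a flow. But the limit has $|\hat{\mathrm{II}}|\le2$ on all of $(-\infty,0]$, whereas a shrinking sphere's curvature blows up at a finite time, a contradiction.

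If that route fails, the alternative I would try next is to exploit convexity together with the pinching via Hamilton's Harnack inequality and a second blow-down. Taking a Type~II rescaling of the ancient limit and applying the Harnack equality case would produce a translator, which Proposition~\ref{no_existence_te} rules out.

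In either case the contradiction yields constants $a(\alpha,n)$ and $S_1(\alpha,n)$. The choice $t<1/(\beta^2 a)$ determines $S_1$.
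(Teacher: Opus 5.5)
Your blow-up set-up matches the paper's: the contradiction sequence, Lemma~\ref{pp}, rescaling by $A_k$, and an ancient limit with $|\hat{\mathrm{II}}|\le 2$ and $\hat{\mathrm{II}}\ge\alpha\hat H\hat g\ge0$. The step that rules out the limit, however, has a genuine gap.

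Your route to umbilicity, applying Lemma~\ref{umblic} to the $k$-th flows at the original scale, does not work. Those flows only satisfy $\mathrm{II}\ge(\alpha H-1)g$, which is not the hypothesis of Lemma~\ref{umblic}. Nothing in the assumptions bounds $\sup H_k(\cdot,0)$ or the lateral boundary term uniformly in $k$, let alone by $o(A_k)$; the initial data are arbitrary. A lower bound on $H$ coming from the pinching gives no upper bound on $\Theta$.

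The paper instead makes $\Theta$ small on the limit flow itself, using two ingredients:
\begin{enumerate}
\item[(i)] The limit is Type~I. This is proved by exactly what you list as a fallback: Hamilton's Type~II point-picking, then the rigidity case of the Harnack inequality giving a translator, which Proposition~\ref{no_existence_te} excludes. Type~I gives $\sup H_\infty(\cdot,T_i)\le C|T_i|^{-1/2}\to0$ as $T_i\to-\infty$, which controls the initial-slice term.
\item[(ii)] Such pinched ancient flows have uniform-in-time spatial decay $|\mathrm{II}_\infty|\to0$ at infinity (Proposition~5 and (10) of \cite{14}). This controls the lateral term.
\end{enumerate}
Applying Lemma~\ref{umblic} on $[T_i,0]$ with $L\to\infty$ then gives $\Theta\to0$, hence umbilicity. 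So the Harnack/translator argument is not an alternative route but a necessary step. On its own it cannot finish the proof, since it says nothing about Type~I limits such as shrinking spheres.

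Your final contradiction is also incorrect. A shrinking sphere restricted to $(-\infty,0]$ has $|\mathrm{II}|$ bounded by its value at $t=0$, because its singular time is positive. It is therefore entirely consistent with $|\hat{\mathrm{II}}|\le2$ and $|\hat{\mathrm{II}}|(0,0)=1$.

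The contradiction has to come from the noncompactness hypothesis, which your argument never uses. It must be used, because the lemma is false without it. A small round sphere of radius $r$ satisfies (1) and (2) for $\alpha\le 1/n$, yet $t|\mathrm{II}|^2=nt/(r^2-2nt)$ is unbounded before the extinction time $r^2/(2n)$, and that time can be made smaller than any $S_1$. In the paper, umbilicity together with $\mathrm{II}_\infty>0$ gives a positive Ricci lower bound, hence a diameter bound, which contradicts noncompactness.
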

	\begin{proof}
		We argue by contradiction. If the conclusion of Lemma \ref{dey} fails, then there exists a sequence of non-compact hypersurface of mean curvature flows $X^{(k)}(t)$ defined for $M^{(k)} \times [0, T_k)$ and points $x_k \in M^{(k)}$ such that each $X^{(k)}(t)$ satisfies the assumptions of Lemma \ref{dey}, but for arbitrarily small $T_k > 0$ and arbitrarily large $a_k>0$, the inequality
		\[
		|\mathrm{II}_k(x_k, t)|^2 \leq a_k t^{-1}~~~~for\quad t \in [0, T_k),
		\]
		fails for every $k$.
		
		We can assume $a_k\to \infty$ and $a_kT_k\to 0$.~By the smoothness of each mean curvature flow,~one can select~$t_k\in(0,T_k]$~such that for each $\mathcal{M}^{(k)}_t=X^{(k)}_t(M^{(k)})$ with induced metric $g_k$, second fundamental form $\mathrm{II}_k$ and mean curvature $\mathrm{H}_k$,
		the following hold:

		(1)~$Q_k^{1,t}(x_k)\subset\subset \mathcal{M}^{(k)}_t$ for $t\in [0,T_k),$
		
		(2)~$\mathrm{II}_k\ge(\alpha\operatorname{H}_k-1)g_k$ in $ Q_k^{1,t}(x_k)\times[0,T_k),$
		
		
		(3)~$|\mathrm{II}_k(x_k,t)|^2<a_kt^{-1}$ for $ (0,t_k),$
		
		(4)~$|\mathrm{II}_k(x_k,t_k)|^2=a_kt_k^{-1}.$\\
		We denote by $  Q_k^{r, t}(p)\doteq \{x\in \mathcal{M}^{(k)}_t=X^{(k)}_t(M^{(k)}) \big|\left|  x  - X^{(k)}_t(p)  \right|<r\}$ for any $p\in M^{(k)}$.
		
		Applying Lemma~\ref{pp} to each flow~$X^{(k)}(t)$,~we can find a constant~$\beta>0$,~times~$\tilde{t_k}\in (0,t_k]$,~and points~$X^{(k)}_{\tilde{t}_k}(\tilde{x_k})\in Q^{1-\beta \sqrt{a_k\tilde{t}_k},{\tilde{t_k}}}(x_k) $~such that      
		\begin{eqnarray*}
			|\mathrm{II}_k(x,t)|^2\leq 4|\mathrm{II}_k(\tilde{x_k},\tilde{t_k})|^2=4A^2_k,
		\end{eqnarray*}
		where~$
		\left( X^{(k)}_t(x),t \right) \in Q^{\frac{1}{8}\beta a_kA_{k}^{-1},{ \tilde{t}_k }}\left( \tilde{x}_k\right) \times \left[ \tilde{t}_k-\frac{1}{8}a_kA_{k}^{-2},\tilde{t}_k \right] 
		$,~$\tilde{t}_kA_k^2\ge a_k\rightarrow +\infty $.
		
		At each point~$(\tilde{x_k},\tilde{t_k})$,~we translate $X^{(k)}(\tilde{x_k})$ to the origin and perform the  rescaling~$\tilde{X}^{(k)}(x,t)=A_k(X^{(k)}(x,\tilde{t_k}+A{^{-2}_k}t)-X^{(k)}(\tilde{x}_k,\tilde{t_k}))$ for $t\in [-\frac{1}{8}a_k,0]$.~Then
		
		
		(a)~$|\widetilde{\mathrm{II}}_k(\tilde{x_k},0)|^2=1,$
		
		(b)~$|\widetilde{\mathrm{II}}_k(x,t)|^2\leq4$ on $ \widetilde{Q}^{\frac{1}{8} \beta a_k,0}_k(\tilde{x_k}) \times [-\frac{1}{8}a_k,0],$
		
		(c)~$\widetilde{\mathrm{II}}_k\ge(\alpha\widetilde{\operatorname{H}}_k-A_k^{-1})\widetilde{g}_k$ on $ \widetilde{Q}^{\frac{1}{8} \beta a_k,0}_k(\tilde{x_k}) \times [-\frac{1}{8}a_k,0].$ \\
		By the compactness theorem for hypersurfaces (Theorem 11.6~in~\cite{11}),~we can obtain a subsequence ~$\tilde{X}^{(k_i)}$  converges smoothly on compact sets to an ancient solution~$X_{\infty}(t)$ for $t\in (-\infty,0]$ with
		$$
		~|\mathrm{II}_{\infty}|\le 4~~~\text{and}~~~\mathrm{II}_{\infty}\ge\alpha\operatorname{H}_{\infty}\tilde{g}_{\infty}\ge 0.
		$$
		
		Next, we show that the ancient solution~$X_{\infty}(t)$ must be of Type I, i.e. $\sup\limits_{t\in(-\infty,0]}|t||\mathrm{II}_{\infty}|<\infty$. Otherwise, suppose that $\sup\limits_{t\in(-\infty,0]}|t||\mathrm{II}_{\infty}|=\infty$. As Hamilton did in \cite{hamilton1995formation}, 
		we choose any sequence of times $T_i \to -\infty$ and positive numbers $\varepsilon_i \to 0$, and select a sequence of points $(x_i, t_i)$ such that
		\[
		|t_i|\,(t_i - T_i)\, \mathrm{H}_{\infty}^2(x_i, t_i) 
		\geq (1 - \varepsilon_i) \sup_{M_{\infty} \times [T_i, 0]} |t|\,(t - T_i)\, \mathrm{H}_{\infty}^2(x, t).
		\]
		Let $R_i \doteq \operatorname{H}^2_{\infty}\left(x_i, t_i\right)$ and define
		$
		\alpha_i \doteq\left(T_i-t_i\right) R_i \quad \text { and } \quad \omega_i \doteq-t_i R_i .
		$
		Then
		$$
		\begin{aligned}
			\frac{1}{-\alpha_i^{-1}+\omega_i^{-1}}=\frac{\left|t_i\right|\left(t_i-T_i\right) R_i}{\left|T_i\right|}&\geq (1 - \varepsilon_i) \left|T_i\right|^{-1}\sup_{M_{\infty} \times [T_i, 0]} |t|\,(t - T_i)\, \mathrm{H}_{\infty}^2(x, t)\\
			&\geq \frac{1 - \varepsilon_i}{2} \sup_{M_{\infty} \times [\frac{T_i}{2}, 0]} |t|\, \mathrm{H}_{\infty}^2(x, t) \to +\infty.
		\end{aligned}
		$$
		so that
		$
		-\lim _{i \rightarrow \infty} \alpha_i=\infty=\lim _{i \rightarrow \infty} \omega_i. 
		$
		Consider 
		$$
		\tilde{X}_{\infty}^{(i)}(x,t) = R_i^{\frac{1}{2}} \left( X_{\infty}(x, t_i + R_i^{-1} t) - X_{\infty}(x_i, t_i) \right),
		$$
		which is defined on the time interval $\left(-\infty, \omega_i\right]$, and whose mean curvature satisfies
		$$
		\left( \mathrm{H}^{(i)}_{\infty}(t) \right)^2 \leq \frac{\alpha_i \omega_i}{(1 - \epsilon_i)(\alpha_i - t)(\omega_i - t)} \quad \text{on } [\alpha_i, \omega_i].
		$$
		Then we have that $\tilde{X}_{\infty}^{(i)}$ subconverges to an $\alpha$-pinched eternal solution whose mean curvature is at most $1$ and attains the value $1$ at $t = 0$.  Thus, by the rigidity case of the Hamilton differential Harnack inequality\cite{Hamha}, this limit is a non-flat translator, which  contradicts  Proposition \ref{no_existence_te}. This proves  the ancient solution~$X_{\infty}(t)$ must be of Type I.

		Note that, by Proposition~5 and (10) of \cite{14}, the second fundamental form of the ancient solution $X_{\infty}(t)$ satisfies $|\mathrm{II}_{\infty}| \to 0$ uniformly in time as the spatial distance tends to infinity. 
		Since $X_{\infty}(t)$ is of Type I, we can apply Lemma~\ref{umblic} on $[T_i, 0]$ and, letting $T_i \to -\infty$, conclude that the ancient solution $X_{\infty}(t)$ is a totally umbilical hypersurface. Since $\mathrm{II}_{\infty}>0$ , we have $\operatorname{Ric}(\tilde{g}_{\infty}(0))>c>0$.~The lower bound on Ricci curvature implies an upper bound for the diameter, 
		which contradicts the noncompactness of the hypersurface. 
		
	\end{proof}
	
	\section{Proof of the Main Theorem}\label{section4}
	This section is devoted to the proofs of Theorem~\ref{lcmcf}~and~Theorem \ref{czx}.

	\textbf{The proof of Theorem~\ref{lcmcf}:}
	
	Before proceeding with the proof,~we first fix the necessary constants.~Given any pinching constant~$\alpha >0$ and dimension $n>2$, we define: 
	
	\begin{itemize}
		\item ~$\lambda(n) ,~\gamma(n)\text{~be the constants from Theorem~\ref{zhu}~},$
		\item $\beta(n)\text{~be the constant from Lemma~\ref{pp}~},$
		\item $a_1(\alpha,n)\text{~be the constant from Lemma~\ref{dey}~},$
		\item $a_0=\text{max}\{1,\lambda\gamma,\lambda(a_1+\gamma)\},$
		\item $S(a_0,\alpha)\text{~be the constant from Lemma~\ref{weakpc}~},$
		\item $\lambda_0=16\text{max}\{\beta,{(a_1a_0)}^{-\frac{1}{2}},S^{-\frac{1}{2}},S_1^{-\frac{1}{2}} \},$
		\item $\mu=\sqrt{1+\gamma {a_1}^{-1}}-1>0$.
	\end{itemize}

	Without loss of generality, we may assume that $X_0(p)$ is the origin in $\mathbb{R}^{n+1}$. We choose a sufficiently small positive constant~$\rho$~such that the second fundamental form satisfies~$| \mathrm{II}(x)|^2 \leq \rho^{-2}$ in $ Q^{R+4}(p)$.~Applying Lemma~\ref{wbh}~and Theorem~\ref{zhu}~to the region~$ Q^{R+4}(p)$,~we obtain a mean curvature flow~$X(t)$~on~$ Q^{R+3}(p)$~with initial condition~$X(0)=X_0~for~t\in [0 ,\gamma(n) \rho^2]$~and~$| \mathrm{II}(x,t)|^2 \leq \lambda(n) \rho^{-2}.$       
	From our choice of constants where~$a_0\geq \lambda\gamma$,~we can express the curvature condition as~$| \mathrm{II}(x,t)|^2 \leq a_0 t^{-1}.$  
	
	We define recursive sequences for the time~$t_k$~and radius~$r_k$~as follows:  
	
	(i)~$t_1=\gamma \rho^2,~r_1=R+3,$
	
	(ii)~$t_{k+1}=(1+\gamma {a_1}^{-1})t_k,~k\geq 1,$
	
	(iii)~$r_{k+1}=r_k-\lambda_0 \sqrt{a_0 t_k},~k\geq 1.$\\
	\textbf{Let} $\mathcal{P}(k)$ \textbf{be the following statement:~There exists a smooth mean curvature flow}~$X(t)$ \textbf{defined on}~$Q^{r_k,0}(p)\times [0,t_k]$~\textbf{satisfying initial condition}~$X(0)=X_0$~\textbf{with curvature bound}~$| \mathrm{II}(x,t)|^2 \leq a_0 t^{-1}.$
	
	~Note that by proper choice of~$\rho$,~$\mathcal{P}(1)$~holds. The goal is to prove~$\mathcal{P}(k)$~holds for all positive integers~$k$~when~$r_k\ge 0$.        
	Assume that $\mathcal{P}(k)$~holds.~Then there exists a mean curvature flow~$X(t)$~defined on~$Q^{r_k,0}(p)\times [0,t_k]$~that satisfies the curvature bound~$$| \mathrm{II}(x,t)|^2 \leq a_0 t^{-1}.$$We next prove $\mathcal{P}(k+1)$ holds.
	
	Fix a point ~$X_0(y)\in Q^{r_{k+1}+\frac{1}{2}\lambda _0\sqrt{a_0t_k},0  }\left( p \right) $.~Since $r_{k+1}=r_k-\lambda_0 \sqrt{a_0 t_k},$ the following holds:$$
	Q^{\frac{1}{4}\lambda _0\sqrt{a_0t_k},0 }\left( y \right) \subset \subset Q^{r_k,0 }\left(p\right). 
	$$
	Let~$\lambda _1=\frac{1}{4}\lambda _0\sqrt{a_0t_k}$.~We consider the rescaled mean curvature flow:~$
	\tilde{X}^{\lambda _{1}}\left(x, t \right) =\lambda _{1}^{-1}X\left(x, \lambda _{1}^{2}t \right) $ with second fundamental form $\mathrm{\tilde {II}}^{\lambda _1} $, mean curvature $\tilde {\operatorname{H}}^{\lambda _1} $ and induced metric $\tilde g^{\lambda _1} $ for~$t\in \left[0, 16\lambda _{0}^{-2}a_{0}^{-1} \right] 
	$ and $x \in \tilde{Q}_{\lambda _1}^{1,0  }\left(y \right)=
	Q^{\frac{1}{4}\lambda _0\sqrt{a_0t_k},0}\left(y \right),$ where  $  \tilde{Q}^{1,0  }_{\lambda _1}\left(y \right)\doteq B(\tilde{X}^{\lambda _{1}}_0(y),1)\cap \tilde{M}^{\lambda _{1}}_0 $ for $\tilde{M}^{\lambda _{1}}_t=\tilde{X}^{\lambda _{1}}_t(M).$
	
	Observe that both the pinching condition and the curvature decay estimate are invariant under this rescaling. Consequently, for $\tilde{X}^{\lambda _{1}}\left(x, t \right) $ the following hold  on $ \tilde{Q}^{1,0  }_{\lambda _1}\left(y \right)$: 
	~$$ \mathrm{\tilde {II}}^{\lambda _{1}}\left( x,0\right) \geq \alpha  \tilde {\operatorname{H}}^{\lambda _{1}}\left( x,0\right)\tilde g^{\lambda _{1}}(x,0)\ge 0~~~and~~~| \mathrm {\tilde {II}}^{\lambda _{1}}(t)|^2 \leq a_0 t^{-1}.$$
	Applying Lemma \ref{weakpc}~to $\tilde{X}^{\lambda _{1}} $,~with the parameter~$\lambda _{0}\geq 16S^{-\frac{1}{2}}$~and~$a_0\geq 1$,~we derive the curvature estimate: 
	$$ \tilde {\mathrm{II}}^{\lambda _{1}}\left(y,t\right) \geq (\alpha  \tilde {\operatorname{H}}^{\lambda _{1}}\left(y,t\right) -1)\tilde g^{\lambda _{1}}(y,t),~t\in [0 , 16\lambda _{0}^{-2}a_{0}^{-1} ].$$
	We obtain the following weak pinching estimate on the spacetime domain $
	Q^{r_{k+1}+\frac{1}{2}\lambda _0\sqrt{a_0t_k},0    }\left( p \right) \times \left[ 0,t_k \right] 
	$~:$$ \mathrm{II}\left(x,t\right) \geq (\alpha  \operatorname{H}\left(x,t\right)-\left( \frac{1}{4}\lambda _0\sqrt{a_0t_k} \right) ^{-1})g(t) 
	.$$
	
	For any point~$X_0(y)\in Q^{r_{k+1}+\frac{1}{4}\lambda _0\sqrt{a_0t_k},0 }\left(p\right)$, Lemma ~\ref{ssq} implies the following: 
	$$Q^{\frac{1}{8}\lambda _0\sqrt{a_0t_k},t  }\left( y \right) \subset Q^{\frac{1}{4}\lambda _0\sqrt{a_0t_k},0}\left( y\right) \subset \subset Q^{r_{k+1}+\frac{1}{2}\lambda _0\sqrt{a_0t_k},0 }\left(p \right) ~~\text{for}~~~t\in [0,t_k].$$
	Let~$ \lambda _{2} =\frac{1}{8}\lambda _0\sqrt{a_0t_k}$ and define the rescaled flow~$\tilde{X}^{\lambda _{2}}\left(x, t \right) =\lambda _{2}^{-1}X\left(x, \lambda _{2}^{2}t \right)$ for $ t\in \left[0, 64\lambda _{0}^{-2}a_{0}^{-1} \right]$ and $ x \in \tilde{Q}_{\lambda _{2}}^{1,t}\left(y \right)=Q^{\frac{1}{8}\lambda _0\sqrt{a_0t_k},{\lambda _{2}^{2}t } } \left( y \right) $. Then the mean curvature flow $\tilde{X}^{\lambda _{2}} $ preserves the weak pinching estimate on $\tilde{Q}_{\lambda _{2}}^{1,t}\left(y \right)$ :  
	$$ \tilde {\mathrm{II}}^{\lambda _{2}}\left( x,t\right) \geq (\alpha  \tilde {\operatorname{H}}^{\lambda _{2}}\left(x,t\right) - \frac{1}{2})\tilde g^{\lambda _{2}}(x,t)\geq (\alpha  \tilde {\operatorname{H}}^{\lambda _{2}}\left( x,t\right) -1)\tilde g^{\lambda _{2}}(x,t).$$ 
	Applying Lemma~\ref{dey}~to~$\tilde{X}^{\lambda _{2}}$~and using the scale invariance of curvature decay estimate, we obtain the following curvature decay estimate for the original flow $X(t)$:  
	\begin{equation}
		| \mathrm{II}(x, t)|^2 \leq \frac{a_1}{t},t \in (0, t_k].
		\label{5}
	\end{equation}

	
	
	
	Let ~$
	\varOmega =Q^{r_{k+1}+\frac{1}{8}\lambda _0\sqrt{a_0t_k},0  }\left(p\right) 
	$~and denote~$X_k=X(t_k)$.~The estimate~\eqref{5}~implies the following uniform curvature bound~$
	sup_{\varOmega}|\mathrm{II}|\le \rho ^{-2}=\left( \sqrt{a_{1}^{-1}t_k} \right) ^{-2}.$  
	Since by the parameter choices~$\lambda _0 \geq 16 \beta$~and~$\lambda _0\sqrt{a_0a_1} \geq 16$,~an application of Lemma~\ref{ssq} yields, for any~$
	X_k(x)\in Q^{r_{k+1},0   }\left(p\right) 
	$,~the following inclusions:  $$
	Q^{\rho ,{t_k}  }\left( x\right) \subset Q^{\frac{1}{16}\lambda _0\sqrt{a_0t_k},{t_k} }\left( x \right) \subset Q^{\frac{1}{8}\lambda _0\sqrt{a_0t_k},0   }\left(x\right) \subset \subset \varOmega 
	.~$$
	This implies ~$
	Q^{r_{k+1},0   }\left( p \right) \subset \varOmega _{\rho}
	$,~where~$\varOmega _{\rho}=\left\{ x\in \varOmega \ |\ Q^{\rho,{t_k} }\left( x \right) \subset \subset \varOmega \right\} $.~
	By applying Theorem~\ref{zhu}~we obtain a mean curvature flow~$X(t)$~defined on the spacetime region~$
	Q^{r_{k+1} ,0  }\left(p \right) \times \left[ t_k,t_k+\gamma \rho ^2 \right] 
	$.~This extends the flow forward in the time interval $[t_k,t_{k+1}]$.~By our initial assumptions on the constants,~which satisfies ~$
	\lambda \left( a_1+\gamma\right) \le a_0$~and~$t_k\left( 1+\gamma a_{1}^{-1} \right) =t_{k+1}
	$,~ we derive the following uniform curvature decay for the extended mean curvature flow~$X(t)$:~$$| \mathrm{II}\left( x,t) \right|^2\le \lambda \rho ^{-2}=\lambda a_1t_{k}^{-1}\le a_0t^{-1}.$$
	This verifies that~$\mathcal{P}(k+1)$~holds true. 
	
	Finally,~we establish a positive lower bound for the time $t_i$. Since $r_j\rightarrow -\infty  ~\text{as}~ j\rightarrow +\infty  $ and $r_1=R+3$, there exists an integer $i\in \mathrm{N}^+$  such that~$r_i\ge R+1$~and~$r_{i+1}<R+1.$~Indeed,~when~$r_i>0$~,~$P(i)$~holds.~Thus  
	\begin{equation*}
		\begin{split}
			R+1>r_{i+1}&=r_1-\lambda _0\sqrt{a_0}\sum_{k=1}^i{\sqrt{t_k}}\\
			&\ge R+3-\lambda _0\sqrt{a_0t_i}\sum_{k=0}^{\infty}{( 1+\mu) ^{-k}}\\
			&=R+3-\sqrt{t_i}\frac{\lambda _0\sqrt{a_0}\left( 1+\mu \right)}{\mu},
		\end{split}
		\label{eq:label}
	\end{equation*}
	it follows that $$
	t_i>\frac{4\mu^2}{a_0\lambda _{0}^{2}\left( 1+\mu \right) ^2}=T\left( \alpha, n\right) 
	.$$
	
	By our assumptions, the constant~$a_0$~depends only on the dimension~$n$~and the pinching constant~$\alpha$. Now we let constant $C =a_0 $.~In precise terms, given a fixed pinching constant~$\alpha$,~there exists a mean curvature flow solution~$X(t)$~defined on the spacetime domain~$
	Q^{R+1,0  }\left(p \right) \times \left[ 0,T \right] 
	$~with the second fundamental form decay~$| \mathrm{II}(x, t)|^2 \leq \frac{C}{t}.$
	For any ~$
	X_0(x_0)\in Q^{R,0  }\left( p \right) 
	$,~we apply Lemma~\ref{weakpc} again to the unit ball~$
	Q^{1,0 }\left( x_0 \right) 
	$~to obtain the weak pinching estimate $\mathrm{II}(x, t) \geq \left(\alpha  \operatorname{H}(x, t)-1\right)g(x,t)$,
	~thereby completing the proof of Theorem $\ref{lcmcf}$.\qed\\

	We now present the proof of our main theorem.\\  
	\textbf{The proof of Theorem~\ref{czx}:}
	Since the hypersurface $\mathcal{M}=X_0(M)$ is complete,~there exists an exhaustion~$\{\mathcal{M}^{(i)} \}$~satisfying:
	
	(a)~$\mathcal{M}^{(i)}=B_{i} \cap \mathcal{M} $ is an open subset of~$\mathcal{M}$ for each integer $i \geq 4$,
	
	(b)~$\bar{\mathcal{M}}^{(i)}\subset \mathcal{M}^{(i+1)}$,
	
	(c)~$\underset{i=1}{\overset{\infty}{\cup}}\mathcal{M}^{(i)}=\mathcal{M}$.
	
	For each~$\mathcal{M}^{(i)}$,~we define $\tilde{X}^{(i)}=\frac{1}{i}X$ and denote the rescaled hypersurfaces by $\tilde{\mathcal{M}}^{(i)}$. The rescaled hypersurfaces satisfy  $$\tilde{\mathrm{II}} \geq \alpha  \tilde{\operatorname{H}}\tilde{g}\geq 0.$$  
	
	By Theorem~\ref{lcmcf},~there exists a smooth mean curvature flow~$\tilde{X}^{(i)}(t)$~defined on~$B_1\times [0,T]$~with the following properties for all~$(x, t) \in B_1 \times[0, T]$~:   
	$$
	\left\{ \begin{array}{l}
		\tilde{X}^{(i)}=\tilde{X}^{(i)}(0),\\
		| \tilde{\mathrm{II}}(x, t)|^2 \leq \frac{C}{t} ,\\
		\tilde{\mathrm{II}}(x, t) \geq \left(\alpha  \tilde{\operatorname{H}}(x, t)-1\right)\tilde{g}(x,t),\\
	\end{array} \right. 
	$$
	where $C$ is a constant depending only on~$n$ and $\alpha$.
	Define the rescaled mean curvature flow by
	~$X^{(i)}(t)=i\tilde{X}^{(i)}(i^{-2}t)$.~This yields a solution~$X^{(i)}(t)$~defined on ~$
	B_i \times \left[ 0,T{i}^{2} \right] 
	$~with:
	$$
	\left\{ \begin{array}{l}
		{X}^{(i)}={X}^{(i)}(0),\\
		| {\mathrm{II}}(x, t)|^2 \leq \frac{C}{t} ,\\
		{\mathrm{II}}(x, t) \geq \left(\alpha {\operatorname{H}}(x, t)-{i}^{-2}\right)g(x,t),\\
	\end{array} \right. 
	$$
	Consequently,~we obtain a sequence of smooth mean curvature flows~$\{X^{(i)}(t) \}$~that satisfy a curvature decay condition independent of~$i$.~From the results of Ecker –Huisken \cite{10}, the curvature decay estimate~$\frac{C}{t}$~implies uniform bounds (independent of~$i$~)~for all covariant derivatives of the second fundamental form.~By a standard diagonal argument,~there exists a subsequence~$\{X^{k^{(i)}}(t) \}\subset \{X^{(i)}(t) \}$~ that converges smoothly to a mean curvature flow~$X^{(\infty)}(t)$.~Since~$i\to \infty~(k^{(i)} \to \infty)$,~the limit flow~$X^{(\infty)}(t)$~satisfies the following properties for all~$(x,t)\in M\times [0,\infty)$:~       
	$$
	\left\{ \begin{array}{l}
		{X}={X}^{(\infty)}(0),\\
		| {\mathrm{II}}(x, t)|^2 \leq \frac{C}{t} ,\\
		{\mathrm{II}}(x, t) \geq \left(\alpha {\operatorname{H}}(x, t)\right)g(x,t).\\
	\end{array} \right. 
	$$
	This completes the proof of Theorem~\ref{czx}~.\qed

	Theorem~\ref{czx}, in combination with the result of Bourni--Langford--Lynch (Theorem~7 in~\cite{14}), yields a proof of Hamilton's extrinsic pinching theorem via the mean curvature flow. 
	However, since we have already shown that any mean curvature flow satisfying the hypotheses of Theorem~\ref{czx} must be of Type~III, we may restrict the argument of Bourni--Langford--Lynch to the Type~III case.  For the sake of completeness, we present the full argument below. 
	\begin{theorem}\cite{8} \label{hamilton1}
		Any smooth, proper, locally, uniformly convex hypersurface $\operatorname{X}:M^n \to \mathbb{R}^{n+1}$, $n \geq 2$, with a pinched second fundamental form:
		$$\mathrm{II}\geq \alpha \operatorname{H}g \text{ for some } \alpha>0,$$ is necessarily compact. 
	\end{theorem}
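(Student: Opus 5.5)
We argue by contradiction: assume $\mathcal{M}=X(M^n)$ is noncompact. Since $X$ is proper, $\mathcal{M}$ is complete and properly embedded. The pinching $\mathrm{II}\ge\alpha\operatorname{H}g$ with local uniform convexity gives $\operatorname{H}>0$, so the hypotheses of Theorem~\ref{czx} hold. (For $n=2$ we would need a version of Theorem~\ref{czx} in that dimension; we will assume the argument carries over, or else treat $n=2$ directly by the Bourni--Langford--Lynch argument.) Theorem~\ref{czx} then yields an immortal flow $X(t)$, $t\in[0,\infty)$, with
\[
|\mathrm{II}|^2\le C/t \quad\text{and}\quad \mathrm{II}\ge\alpha\operatorname{H}g\ge 0 .
\]
So the flow is of Type~III. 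The goal is to blow it down to a self-similar expander and contradict Proposition~\ref{no_existence_te}.

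\textbf{Step 1 (strict positivity).} First we make sure the flow stays non-flat. By the strong maximum principle applied to $\operatorname{H}$, which satisfies $(\partial_t-\varDelta)\operatorname{H}=|\mathrm{II}|^2\operatorname{H}$, we get $\operatorname{H}>0$ for all $t>0$, since $\operatorname{H}(\cdot,0)>0$. The pinching then gives $\mathrm{II}>0$.

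\textbf{Step 2 (Type~III blow-down).} We run Hamilton's Type~III point-picking, as in \cite{hamilton1995formation}. Choose $T_i\to\infty$ and points $(x_i,t_i)$ that almost maximize $t\operatorname{H}^2$ on $M\times[0,T_i]$. Then rescale parabolically by $R_i=\operatorname{H}^2(x_i,t_i)$, centered at $(X(x_i,t_i),t_i)$. The bound $t\operatorname{H}^2\le nC$ supplies the curvature control on the rescaled flows, and the compactness theorem for mean curvature flows gives a limit. There are two cases.
\begin{itemize}
\item If $\sup t\operatorname{H}^2$ is attained in the limit, the limit is a flow on $(-\tau,\infty)$ whose quantity $(t+\tau)\operatorname{H}^2$ attains an interior spacetime maximum. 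By the rigidity case of Hamilton's Harnack inequality \cite{Hamha}, this limit is an expander.
\item Otherwise, the standard argument produces an eternal limit on which $\operatorname{H}^2$ attains its maximum, and Harnack rigidity again forces a translator.
\end{itemize}
Pinching passes to the limit with the same $\alpha$, because it is closed and scale invariant. The limit has $\operatorname{H}=1$ at the base point, so it is non-flat. In either case Proposition~\ref{no_existence_te} gives a contradiction.

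\textbf{Main obstacle.} The hardest step is justifying the Harnack inequality and its rigidity case on these complete noncompact flows. Hamilton's Harnack requires convexity and bounded curvature on each time slab. The flow is convex by Step~1, and $|\mathrm{II}|^2\le C/t$ gives bounded curvature on $[\varepsilon,\infty)$; so we would apply Harnack to the flow restarted at a small time $\varepsilon>0$ and let $\varepsilon\to0$. We also need the limit to be non-flat with nontrivial $\operatorname{H}$, which follows from the normalization, and we must exclude a limit that is a flat hyperplane, which the normalization $\operatorname{H}=1$ at the base point already does. If one prefers, the translator case can be bypassed by using the Type~III normalization with $\sup t\operatorname{H}^2>0$ directly, as in Bourni--Langford--Lynch \cite[Theorem~7]{14}, restricted to that case.
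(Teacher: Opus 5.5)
Your argument is correct, but it reaches the expander by a different mechanism than the paper.

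\textbf{The paper's route.} The paper does no point-picking and no recentering. It takes the fixed-scale blow-down $X^{(j)}(\cdot,t)=t_j^{-1/2}X(\cdot,t_j(t+1))$ with $t_j\to\infty$. It then applies Hamilton's Harnack inequality to the \emph{original} flow, with $V=0$, to see that $\sqrt{t}\operatorname{H}(x,t)$ is nondecreasing in $t$ for each fixed $x\in M$. Combined with the Type~III bound, $\lim_{s\to\infty}\sqrt{s}\operatorname{H}(x,s)$ exists and is positive. This makes $\sqrt{t+1}\operatorname{H}_\infty$ independent of $t$, so the Harnack quantity vanishes identically on the limit, and rigidity gives an expander.

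\textbf{Your route.} You pick points almost maximizing $t\operatorname{H}^2$ and recenter. This produces a limit on $(-A,\infty)$, with $A=\sup t\operatorname{H}^2\in(0,nC]$, satisfying $(t+A)\operatorname{H}_\infty^2\le A$ with equality at the base point. At that point $\nabla\operatorname{H}_\infty=0$ and $\partial_t\operatorname{H}_\infty+\operatorname{H}_\infty/(2(t+A))=0$. The interior-zero form of Harnack rigidity (the strong maximum principle for the Harnack quantity) then gives the expander.

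\textbf{Trade-off.} The paper's route is shorter, and positivity of the limit comes for free from monotonicity. However, it needs Harnack on a flow whose curvature is unbounded as $t\to0$; your restart-at-$\varepsilon$ device is exactly what is needed there. It also has to follow fixed points of $M$ through the limit. Your route needs Harnack only on the limit, which has bounded curvature on every $[-A+\delta,\infty)$, and non-flatness comes directly from the normalization $\operatorname{H}=1$.

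Three small remarks.
\begin{enumerate}
\item Since $t_iR_i\to A>0$, your second (eternal/translator) case never actually occurs; the first case suffices.
\item Step~1 gives strict convexity of the original flow, not of the limit, and the rigidity step needs $\mathrm{II}_\infty>0$. Obtain $\operatorname{H}_\infty>0$ from the strong maximum principle for $(\partial_t-\varDelta)\operatorname{H}_\infty=|\mathrm{II}_\infty|^2\operatorname{H}_\infty$, using $\operatorname{H}_\infty=1$ at the base point. (A time slice with $\operatorname{H}_\infty\equiv0$ is a hyperplane and, with bounded curvature, stays flat.) Then the pinching gives $\mathrm{II}_\infty>0$.
\item Your caveat about $n=2$ applies equally to the paper. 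Its proof simply invokes Theorem~\ref{czx}, which is stated only for $n>2$.
\end{enumerate}
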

	\begin{proof}[Proof of Theorem   \ref{hamilton1}  via the mean curvature flow]
		We argue by contradiction. Suppose that $\operatorname{X}$ is noncompact. By Theorem~\ref{czx}, there exists a complete solution~$X(t)$ to the mean curvature flow~starting from $\operatorname{X}$ such that
		
		(1) the solution is of Type III, i.e. it exists for $t\in [0,+\infty)$ satisfying ~$| \mathrm{II}(x, t)|^2 \leq \frac{C}{t},$
		
		(2)~${\mathrm{II}}(x, t) \geq \alpha  \operatorname{H}(x, t)g(x,t)\geq0,$\\
		for all~$(x, t) \in M^n \times(0, +\infty)$.
		
		Pick a sequence of times $t_j>0$ with $t_j \rightarrow +\infty$. Consider the rescaled flow $\{X^{(j)}(t)\}_{t \in\left(-\lambda_j^2 t_j, \infty\right)}$  defined by $X^{(j)}(\cdot,t) \doteqdot \lambda_j X(\lambda_j^{-2} t+t_j)$, where $\lambda_j \doteqdot \frac{1}{\sqrt{t_j}}$. Since $\lambda_j^2 t_j \equiv 1$ and
		
		$$
		\operatorname{H}_j(\cdot, t)=\lambda_j^{-1} \operatorname{H}\left(\cdot, \lambda_j^{-2} t+t_j\right) \leq \frac{\Lambda}{\sqrt{t+1}},
		$$
		we obtain a subsequential limit defined for $t \in(-1, \infty)$. Furthermore,
		
		$$
		\sqrt{t+1} \operatorname{H}_{\infty}(\cdot, t)=\lim _{j \rightarrow \infty} \sqrt{t+1} \operatorname{H}_j(\cdot, t)=\lim _{j \rightarrow \infty} \sqrt{\frac{t+1}{\lambda_j^2}}\operatorname{H}\left(\cdot, \frac{t+1}{\lambda_j^2}\right) .
		$$

		By the Hamilton's differential Harnack inequality  \cite{Hamha} and the type-III hypothesis, the limit on the right exists (and is positive) independently of $t$. Thus, by the rigidity case of the differential Harnack inequality, the limit is a non-flat expander, which  contradicts  Proposition \ref{no_existence_te}.
	\end{proof}


\begin{thebibliography}{99}
		\bibitem{11}
		Andrews B, Chow B, Guenther C, et al. Extrinsic Geometric Flows. American Mathematical Society, 2022.
		
		\bibitem{14}
		Bourni T, Langford M, Lynch S. Pinched hypersurfaces are compact. Advanced Nonlinear Studies. 2023 Mar 22;23(1):20220046.
		
		\bibitem{26}
		Brendle S, Schoen R. Sphere theorems in geometry. Surveys in Differential Geometry, 2008, 13(1): 49-84.
		
		
		
		
		
		
		\bibitem{18}
		Chen B L, Zhu X P. Complete Riemannian manifolds with pointwise pinched curvature. Inventiones Mathematicae, 2000, 140: 423-452.
		
		\bibitem{21}
		Cheng L. On locally conformally flat manifolds with positive pinched Ricci curvature. To Appear Calculus of Variations and Partial Differential Equations, 2023.
		
		
		
		\bibitem{20}
		Deruelle A, Schulze F, Simon M. Initial stability estimates for Ricci flow and three dimensional Ricci-pinched manifolds. Duke Math. J. 174(17): 3433-3492
		
		\bibitem{10}
		Ecker K, Huisken G. Interior estimates for hypersurfaces moving by mean-curvarture . Inventiones Mathematicae, 1991, 105(1): 547-569. 
		
		
		\bibitem{4}
		Hamilton R S. Three-manifolds with positive Ricci curvature. Journal of Differential Geometry, 1982, 17(2): 255-306.
		
		\bibitem{8}
		Hamilton R S. Convex hypersurfaces with pinched second fundamental form. Communications in Analysis and Geometry, 1994, 2(1): 167-172.
		
		\bibitem{Hamha}
		Hamilton R S.  Harnack estimate for the mean curvature flow, Journal of Differential Geometry, Vol. 41, No. 1 (1995), pp. 215–226.
		
		\bibitem{hamilton1995formation}
		Hamilton R S.  
		\textit{The formation of singularities in the Ricci flow}. 
		In: {\it Surveys in Differential Geometry}, (1995), Vol. 2, 7–136.
		
		\bibitem{15}
		Huisken G. Flow by mean curvature of convex surfaces into spheres. Journal of Differential Geometry, 1984, 20(1): 237-266.
		
		\bibitem{22}
		Hochard R. Short-time existence of the Ricci flow on complete, non-collapsed $3 $-manifolds with Ricci curvature bounded from below. arXiv preprint arXiv:1603.08726, 2016.
		
		
		
		\bibitem{31} 
		Langford, M.  Local convexity estimates for mean curvature flow. Journal für die reine und angewandte Mathematik (Crelles Journal), 2023(800), 45-53.
		
		\bibitem{29}
		Lee, Man-Chun and Topping M Peter. Three-manifolds with non-negatively pinched Ricci curvature. 131 (3) 633-651, November 2025.
		
		\bibitem{LT}
		
		Lee, Man-Chun and Topping M Peter. Manifolds with PIC1 pinched curvature. Geometry \& Topology 29:9 (2025) 4767–4798
		
		
		\bibitem{28}
		Ma L, Cheng L. Yamabe flow and Myers type theorem on complete manifolds. Journal of Geometric Analysis, 2014, 24(1): 246-270.
		
		\bibitem{Ni}
		Ni L. Is a complete Riemannian manifold with positively pinched Ricci curvature compact, arXiv:2510.10279v2
		
		\bibitem{25}
		Ni L, Wu B. Complete manifolds with nonnegative curvature operator. Proceedings of the American Mathematical Society, 2007, 135(9): 3021-3028.
		
		
		
		\bibitem{19}
		Lott J. On 3-manifolds with pointwise pinched nonnegative Ricci curvature. Mathematische Annalen, 2023: 1-20.
		
		
		\bibitem{9}
		Zhu X P. Lectures on Mean Curvature Flows. American Mathematical Society, 2002.
		
	\end{thebibliography}
\end{document}